%% %%%%%%%%%%%%%%%%%%%%%%%%%%%%%%%%%%%%%%%%%%%%%%%%%%%%%%%%%%%%%%%%%%%%%%%%%%%
%%  Diffusion results by Burak Aksoylu and Horst Beyer
%%
%%  rcsid="$Id: paperNew2009.tex,v 1.10 2009/02/03 22:43:19 burak Exp $"
%% %%%%%%%%%%%%%%%%%%%%%%%%%%%%%%%%%%%%%%%%%%%%%%%%%%%%%%%%%%%%%%%%%%%%%%%%%%%

\input{epsf}

\documentclass[11pt,letterpaper]{article}

\usepackage[hang,perpage]{footmisc}

\setlength\footnotemargin{10pt}
\usepackage{amsmath}
\usepackage{amsthm}
\usepackage{amssymb}
\usepackage{stmaryrd}

\usepackage{graphicx}
\graphicspath{{../../figs/}}
%\graphicspath{{/home/burak/burakCVS/math/PDEgems2008/figs/}}

\usepackage{latexsym}
\usepackage{times}
\usepackage[varumlaut]{yfonts}
\usepackage[usenames]{color}
\usepackage{braket}
\usepackage{mathabx}
\usepackage{textcomp}
\usepackage{makeidx}
\usepackage{tocbibind}
\usepackage[bookmarks=true,backref=true,colorlinks=true,linkcolor=darkolivegreen,
citecolor=darkolivegreen,urlcolor=darkolivegreen]{hyperref}
\usepackage[T1]{fontenc}
\usepackage{mathrsfs}
%\usepackage{rotating}
%\usepackage{fancyhdr}
%\pagestyle{fancy}
%\fancyhead[RO,LE]{}

\numberwithin{equation}{subsection}
\parindent  0.mm
\allowdisplaybreaks

% Uncomment for double-spacing
% \usepackage{doublespace}

\setlength{\textwidth}{12.6cm}
\setlength{\textheight}{17.5cm}
\setlength{\oddsidemargin}{1.9cm}
\setlength{\topmargin}{1.4cm}

\theoremstyle{definition}
\newtheorem{ass}{Assumption}[subsection]
\newtheorem{theorem}[ass]{Theorem}
\newtheorem{lemma}[ass]{Lemma}
\newtheorem{definition}[ass]{Definition}
\newtheorem{corollary}[ass]{Corollary}

\newtheorem{rem}[ass]{Remark}

\makeindex

% Different font in captions
\newcommand{\captionfonts}{\footnotesize}
\makeatletter  % Allow the use of @ in command names
\long\def\@makecaption#1#2{%
  \vskip\abovecaptionskip
  \sbox\@tempboxa{{\captionfonts #1: #2}}%
  \ifdim \wd\@tempboxa >\hsize
    {\captionfonts #1: #2\par}
  \else
    \hbox to\hsize{\hfil\box\@tempboxa\hfil}%
  \fi
  \vskip\belowcaptionskip}
\makeatother   % Cancel the effect of \makeatletter
\definecolor{darkolivegreen}{rgb}{0.333333, 0.419608, 0.1843140}

\makeatother

\begin{document}

% Uncomment for double-spacing \doublespacing

\title{On the characterization of asymptotic cases of the diffusion 
equation with rough coefficients and applications to preconditioning}

\author{Burak Aksoylu \\
Louisiana State University \\ 
Department of Mathematics \& \\
Center for Computation and Technology\\
Baton Rouge, LA 70803, USA
\and
Horst R. Beyer \\
Louisiana State University \\
Center for Computation and Technology \\
Baton Rouge, LA 70803, USA}

\date{\today}                                     
% Uncomment if TWO COLUMNS REQUIRED
%\twocolumn

\maketitle

\begin{abstract}
We consider the diffusion equation in the setting of operator theory.
In particular, we study the characterization of the limit of the
diffusion operator for diffusivities approaching zero on a
subdomain $\Omega_1$ of the domain of integration of $\Omega$.  We
generalize Lions' results to covering the case of diffusivities which
are piecewise $C^1$ up to the boundary of $\Omega_1$ and $\Omega_2$,
where $\Omega_2 := \Omega \, \setminus \, \overline{\Omega}_1$ instead
of piecewise constant coefficients.  In addition, we extend both
Lions' and our previous results by providing the strong convergence of
$\left(A_{\bar{p}_\nu}^{-1}\right)_{\nu \in \mathbb{N}^\ast},$ for a
monotonically decreasing sequence of diffusivities
$\left( \bar{p}_\nu \right)_{\nu \in \mathbb{N}^\ast}$.
\\ \\ 
{\bf Mathematics Subject Classification (2000)} 35J25, 47F05, 65J10, 65N99.\\ 
{\bf Keywords:}
Diffusion equation, diffusion operator, rough coefficients, singular
diffusivities
\end{abstract}

%Mathematics Subject Classification (2000)
%35J25 Boundary value problems for second-order, elliptic equations
%47F05 Partial differential operators
%65J10 Equations with linear operators
%65N99 None of the above, but in this section

\section{Introduction}
The diffusion equation
\begin{equation} \label{diffusioneq}
\frac{\partial u}{\partial t} = \textrm{div}  
\left(\, p \, \textrm{grad} \, u \right) + f  
\end{equation}
describes general diffusion processes, including the propagation of
heat, and flows through porous media. Here $u$ is the density of the
diffusing material, $p$ is the diffusivity of the material, and the
function $f$ describes the distribution of `sources' and `sinks'.  The
usage of $\bar{p}:= 1/p$ provides a convenient framework to study
asymptotic cases where diffusivity approaches zero on an open subset
of non-zero measure.  Therefore, our definitions will be based on
$\bar{p}$. This paper focuses on stationary solutions of 
(\ref{diffusioneq}) satisfying
\begin{equation} \label{diffusioneqstat}
- \textrm{div}  
\left(\, \left(1/\bar{p}\right) \, \textrm{grad} \, u \right) = f  \, \, .
\end{equation}
For instance, the fictitious domain method and composite materials are
sources of rough coefficients; see the references in~\cite{KnWi:2003}.
Important current applications deal with composite materials whose
components have nearly constant diffusivity, but vary by several
orders of magnitude.  In composite material applications, it is quite
common to idealize the diffusivity by a piecewise constant function
and also to consider limits where the values of that function approach
zero or infinity in parts of the material.
\newline
\linebreak
For the treatment of these questions, we use methods from 
operator theory. For this, we use a common approach to 
give (\ref{diffusioneq}) a well-defined meaning that, in a 
first step, represents the diffusion 
operator 
\begin{equation} \label{diffusionoperator}
- \textrm{div} \, \left(1/\bar{p}\right) \, \textrm{grad}  
\end{equation}
as a densely-defined positive self-adjoint linear operator
$A_{\bar{p}}$ in $L_{\mathbb{C}}^2(\Omega)$. As a result,
(\ref{diffusioneqstat}) is represented by the equation
\begin{equation} \label{diffusionEquation2}
A_{\bar{p}} u = f \, \, , 
\end{equation}
where $f$ is an element of the Hilbert space, and $u$ is 
from the domain, $D(A_{\bar{p}})$, of $A_{\bar{p}}$.
\newline
\linebreak
In our previous paper~\cite{AkBe2008}, we treat diffusivities from the
class ${\cal L}$ consisting of $p \in L^{\infty}(\Omega)$ that are
defined almost everywhere $\geq \varepsilon$ on $\Omega$ for some
$\varepsilon >0$, where $\Omega \subset {\mathbb{R}}^n$, $n \in
{\mathbb{N}}^{*}$, is some non-empty open subset. By use of Dirichlet
boundary conditions, $\bar{p} \in {\cal L}$ induces a densely-defined,
linear, self-adjoint, strictly positive operator $A_{\bar{p}}$ in
$L^2_{\mathbb{C}}(\Omega)$. By assuming
a weak notion of convergence in ${\cal L}$, we showed that the maps
$\mathcal{S}$ and $\mathcal{T}$ defined by 
\begin{eqnarray} \label{soluMapS}
\mathcal{S}(\bar{p}) & := & A_{\bar{p}}^{-1} \, , \\
\label{soluMapT}
\mathcal{T}(\bar{p}) & := & -\left(1/\bar{p}\right) \, 
\nabla A_{\bar{p}}^{-1} \,,
\end{eqnarray}
for every $\bar{p} \in \mathcal{L}$ are strongly sequentially continuous.
\newline
\linebreak
For the case $n=1$ and bounded open intervals of ${\mathbb{R}}$, we
were able to show stronger results that include also the asymptotic
cases, except that where the asymptotic `diffusivity' is almost
everywhere infinite on the whole interval.  We showed
that $\mathcal{S}$ and $\mathcal{T}$ have unique extensions to
sequentially continuous maps $\hat{\mathcal{S}}$ and
$\hat{\mathcal{T}}$ in the operator norm on the set of a.e. positive
elements of $L^{\infty}(\Omega) \setminus \{0\}$. In addition, an
explicit estimate of the convergence behaviour of the maps is given,
\begin{equation} \label{perturbationExpansion}
\hat{\mathcal{S}}(\bar{p}) = \hat{\mathcal{S}}(\bar{p}_\infty) + 
\mathcal{O}(\| \bar{p} - \bar{p}_\infty \|_1).
\end{equation}
Furthermore, we explicitly calculated $\hat{\mathcal{S}}$ and
$\hat{\mathcal{T}}$. The knowledge of $\hat{\mathcal{S}}$ and
$\hat{\mathcal{T}}$ for asymptotic $p$ is essential for the purpose of
preconditioning. Since $\hat{\mathcal{S}}$ maintains continuity on
$\partial \mathcal{L}$, the boundary value can be used as the dominant
factor in a perturbation expansion for $\mathcal{S}(\bar{p})$ for
$\bar{p} \in \mathcal{L}$.
By rewriting \eqref{perturbationExpansion}, we arrive at an expression
for a preconditioned operator:
$$
\begin{array}{lllll}
A_{\bar{p}}^{-1} & = & A_{\bar{p}_\infty}^{-1}  & + &
\mathcal{O}(\| \bar{p} - \bar{p}_\infty \|_1) \, ,\\
A_{\bar{p}_\infty}^{-1} A_{\bar{p}} & = & I & + & 
\mathcal{O}(\| \bar{p} - \bar{p}_\infty \|_1).
\end{array}
$$
\begin{figure}[htbp]
\centering{
\includegraphics[width=3.5cm]{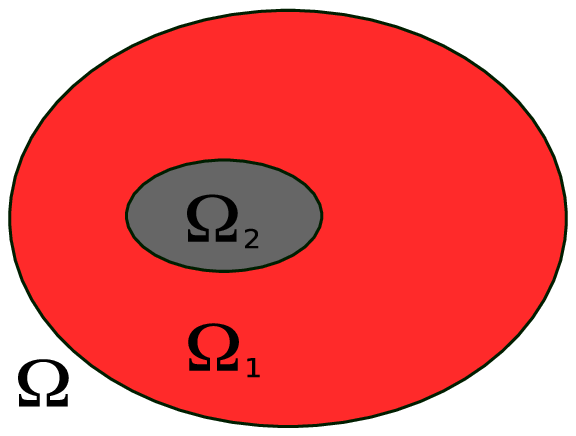}
\hspace{1cm}
\includegraphics[width=3.5cm]{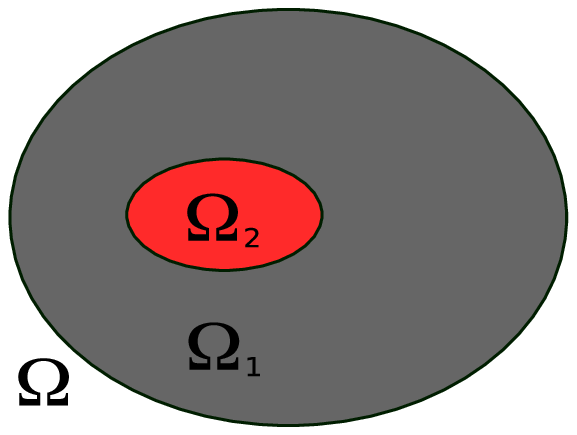}
}
\caption{Red and gray color indicate high and low diffusivity values, 
respectively. (Left) Lions' subdomain configuration. (Right) The configuration
with diffusivity values swapped. \label{fig:domain1}}
\end{figure}
For preconditioning purposes, in this paper, we study the boundary
behaviour of $\mathcal{S}$ for $n > 1$. We prove the strong
convergence of $\mathcal{S}(\bar{p}_\nu)$ for any monotonically
decreasing $\left(\bar{p}_\nu\right)_{\nu \in \mathbb{N}}$. We also
characterize the associated limits for particular cases.  The
establishment of these results is the goal of the present article.  As
expected, the limits are structurally simpler. Therefore, utilizing
the limits as preconditioners should lead to computationally feasible
preconditioning.  One such approach was taken by the first author
in~\cite{AGKS:2007}.  For showing effectiveness of the proposed
preconditioner, one utilizes spectral equivalences. For the derivation
of such equivalences, operator theory provides the natural framework.
These questions are the subject for further study.

\section{Previous results and our improvements}

The treatment of the diffusion equation with piecewise constant
discontinuous coefficient has been pioneered by
J. L. Lions~\cite{lions}.  In his lecture notes, he considers the
limit of the solution of (\ref{diffusioneqstat}) where the limit is
associated to a one-parameter family of piecewise constant diffusivities
$\left( \bar{p}_{\varepsilon} \right)_{\varepsilon \, \in \,
(0,\infty)}$ approaching zero on a subdomain $\Omega_1$ of an open
subset $\Omega$ of $\mathbb{R}^n,~n \geq 1$.  In this, the boundary of
$\Omega_1$ intersects that of $\Omega$; see the left of 
Figure~\ref{fig:domain1}.
\newline 
\linebreak
Using a first order formulation of the diffusion operator, a similar
piecewise constant one-parametric approach was used
in~\cite{bakhvalov1,knyazev1}, but with diffusivities approaching
infinity on a subdomain; see the right of Figure~\ref{fig:domain1}.
By a simple scaling argument, it can be seen that the results based on
such subdomain configuration can be reproduced from the Lions'
configuration and vice versa.
\newline \linebreak
%%% our improvements
In addition to our aforementioned one-dimensional results, in the
previous paper~\cite{AkBe2008}, by assuming a weak notion of
convergence in ${\cal L}$ for $n \geq 2$, we showed that the solution
maps $\mathcal{S}$ and $\mathcal{T}$, defined in \eqref{soluMapS} and
\eqref{soluMapT} respectively, are strongly sequentially continuous. 
\newline 
\linebreak
The basis of Lions' results provides an abstract lemma which derives
a Laurent expansion for $\xi_{\varepsilon}$ in terms of $\varepsilon$
satisfying the equation
\begin{equation} \label{sesquilinearEquation}
s_1({\xi},\xi_{\varepsilon})  + \varepsilon \, s_2({\xi}, \xi_{\varepsilon}) 
= \braket{{\xi}|\eta}
\end{equation}
for every $\xi \in X$. Here, $\eta \in X$ is given and $s_1, s_2$ are
prescribed sesquilinear forms on the abstract Hilbert space $X$
satisfying certain conditions. The weak formulation
of \eqref{diffusioneqstat} corresponding to $\bar{p}_{\varepsilon}$
leads to this class of problems.  Lions sketched the proof, only. For
the convenience of the reader, this lemma is given in the appendix
along with a full proof; see Lemma~\ref{basiclemmalions} and
Lemma~\ref{lemma:LionsAbstract}.  In addition, Lions sketched the
application of this lemma to the diffusion equation with piecewise
constant coefficients as an example.  Here, we extend Lions'
results in various directions.  In particular, we consider strong
solutions of the operator equation instead of Lions' weak solutions.
Note that the establishment of these results is based solely on the
foundation provided in our preceding paper~\cite{AkBe2008}.

\begin{itemize}
\item[i)] %A main contribution of this article is a 
We generalize Lions' example to a theorem covering the case of
diffusivities which are piecewise $C^1$ up to the boundary of
$\Omega_1$ and $\Omega_2$, where 
$\Omega_1 := \Omega \, \setminus \, \overline{\Omega}_2$ 
instead of piecewise constant coefficients.  Note that for this,
as is also the case in Lions' result, the source function $f$ is
required to be an element of $W^1_{0,{\mathbb{C}}}({\Omega})$ which
incorporates a regularity condition and a homogeneous boundary
condition.  

\item[ii)] 
In addition, we extend both Lions' and our previous results by 
providing the strong convergence of
$$
\left( \mathcal{S}(\bar{p}_\nu) \right)_{\nu \in \mathbb{N}^\ast}
= \left(A_{\bar{p}_\nu}^{-1}\right)_{\nu \in \mathbb{N}^\ast},
$$ for a
monotonically decreasing sequence $\bar{p}_1, \bar{p}_2, \ldots$ in
$\mathcal{L}$; see Theorem~\ref{thm:variationalFramework}.  The
coefficients in Lions' case, i.e., one parametric piecewise constant
coefficients, automatically lead to a particular case of a
monotonically decreasing sequence of diffusivities.  Differently from
Lions' case, our construction does not require a particular
configuration of subdomains.  On the other hand, differently to Lions,
our theorem does not give a characterization of the corresponding the
strong limit.  Also, Lions shows convergence in the stronger
$W^1$-norm as opposed to convergence in the $L^2$-norm, here.
\end{itemize}

\section{Preliminaries}

\label{prerequisites}

\begin{definition} \label{weaksolutions} ({\bf Weak solutions})
Let $X$ be a non-trivial complex Hilbert space, $A : D(A) \rightarrow 
X$ be a densely-defined, linear, self-adjoint and strictly 
positive operator in $X$. For $\eta \in X$, we call 
$\xi \in D(A^{1/2})$ 
a weak solution of the equation 
\begin{equation} \label{invertA}
A \xi = \eta 
\end{equation}
if 
\begin{equation*}
\braket{A^{1/2} \xi|A^{1/2} \xi^{\prime}} =
\braket{\eta | \xi^{\prime}}
\end{equation*} 
for every $\xi^{\prime} \in D(A^{1/2})$. 
\end{definition}

\begin{rem}
We note that
the `strong' solution of the equation (\ref{invertA}),    
$\xi := A^{-1} \eta$, is also a weak solution of that 
equation. In addition, by the bijectivity of $A^{1/2}$, 
it follows the uniqueness of a weak solution. Hence 
$\xi \in D(A^{1/2})$ is a weak solution of the equation 
(\ref{invertA}) if and only if it is a strong solution
of (\ref{invertA}), i.e., if and only if 
$\xi \in D(A)$ and $A \xi = \eta$.  
\end{rem}

We define the diffusion operator as operator in
$L^{2}_{\mathbb{C}}({\Omega})$ and give basic properties.

Diffusion operators corresponding to diffusivities from 
the following large subset ${\cal L}$ of $L^{\infty}(\Omega)$ 
will turn out to be densely-defined, linear, self-adjoint operators. 

\begin{definition}
We define the subset ${\cal L}$ 
of $L^{\infty}(\Omega)$ to consist 
of those elements $\bar{p}$ for which there are real 
$C_1, C_2$ satisfying $C_2 \geq C_1 > 0$ and such that
$C_1 \leq \bar{p} \leq C_2$ a.e. on $\Omega$. Note that the last 
also implies that $1 / \bar{p} \in {\cal L}$ 
and in particular that  
$1 / C_2 \leq 1 / \bar{p} \leq 1/ C_1$ a.e. on $\Omega$. 
\end{definition}

\begin{definition} \label{defA} For $\bar{p} \in \mathcal{L}$, we define the 
linear operator $A : D(A) \rightarrow L^{2}_{\mathbb{C}}({\Omega})$ 
in $L^{2}_{\mathbb{C}}({\Omega})$ by 
\begin{equation*}
D(A) := \{ u \in W^1_{0,{\mathbb{C}}}({\Omega}): 
(1 / \bar{p}) {\nabla_{w}} u \in D({\nabla_{\!0}}^{*}) \}
\end{equation*}
and 
\begin{equation*}
A u := {\nabla_{\!0}}^{*} (1 / \bar{p}) \, {\nabla_{w}} u
\end{equation*} 
for every $u \in D(A).$
\end{definition}

\begin{theorem} \label{secondorderoperator}
Let $\bar{p} \in {\cal L}$.  Then $A$ is a densely-defined, linear,
self-adjoint operator in $L^{2}_{\mathbb{C}}({\Omega})$.
\end{theorem}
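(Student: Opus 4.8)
The plan is to recognize $A$ as an operator of the form $T^{*}T$ and to invoke von Neumann's theorem. Write $M$ for the operator of multiplication by $1/\bar{p}$ on the target Hilbert space of $\nabla_{\!0}$ (the space of $\mathbb{C}^{n}$-valued $L^{2}$-functions on $\Omega$). Since $\bar{p}\in\mathcal{L}$ we have $1/C_{2}\leq 1/\bar{p}\leq 1/C_{1}$ a.e. on $\Omega$, so $M$ is a bounded, everywhere-defined, self-adjoint operator with bounded everywhere-defined inverse, and its positive square root $M^{1/2}$ is multiplication by $(1/\bar{p})^{1/2}$, with the same properties. Set $T:=M^{1/2}\nabla_{\!0}$, so that $D(T)=D(\nabla_{\!0})=W^{1}_{0,\mathbb{C}}(\Omega)$.

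First I would check that $T$ is densely-defined and closed. Density is immediate because $C^{\infty}_{c}(\Omega)\subset W^{1}_{0,\mathbb{C}}(\Omega)$ is dense in $L^{2}_{\mathbb{C}}(\Omega)$. Closedness follows from the closedness of $\nabla_{\!0}$ together with the boundedness of $M^{1/2}$ and $M^{-1/2}$: if $u_{k}\to u$ in $L^{2}_{\mathbb{C}}(\Omega)$ and $Tu_{k}=M^{1/2}\nabla_{w}u_{k}\to g$, then $\nabla_{w}u_{k}=M^{-1/2}(Tu_{k})\to M^{-1/2}g$, whence $u\in D(\nabla_{\!0})$ and $\nabla_{w}u=M^{-1/2}g$, i.e. $u\in D(T)$ and $Tu=g$. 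By von Neumann's theorem, $T^{*}T$ is then a densely-defined, self-adjoint (and nonnegative) linear operator in $L^{2}_{\mathbb{C}}(\Omega)$; in particular $D(T^{*}T)$ is a core for $T$, hence dense in $L^{2}_{\mathbb{C}}(\Omega)$.

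The remaining task is to identify $T^{*}T$ with the operator $A$ of Definition~\ref{defA}. Since $M^{1/2}$ is bounded and everywhere-defined, $(M^{1/2}\nabla_{\!0})^{*}=\nabla_{\!0}^{*}M^{1/2}$ (the standard rule $(BS)^{*}=S^{*}B^{*}$ for $B$ bounded and $S$ densely-defined), so $T^{*}T=\nabla_{\!0}^{*}M^{1/2}M^{1/2}\nabla_{\!0}=\nabla_{\!0}^{*}M\nabla_{\!0}$. Unraveling the domains: $u\in D(T^{*}T)$ if and only if $u\in D(\nabla_{\!0})=W^{1}_{0,\mathbb{C}}(\Omega)$ and $M^{1/2}\nabla_{w}u\in D(\nabla_{\!0}^{*}M^{1/2})$, the latter meaning $M^{1/2}(M^{1/2}\nabla_{w}u)=(1/\bar{p})\nabla_{w}u\in D(\nabla_{\!0}^{*})$, which is exactly the defining condition of $D(A)$; and then $T^{*}Tu=\nabla_{\!0}^{*}\bigl(M^{1/2}(M^{1/2}\nabla_{w}u)\bigr)=\nabla_{\!0}^{*}(1/\bar{p})\nabla_{w}u=Au$. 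Hence $A=T^{*}T$ is densely-defined, linear and self-adjoint, as claimed.

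I expect the only genuinely delicate point to be the adjoint identity $(M^{1/2}\nabla_{\!0})^{*}=\nabla_{\!0}^{*}M^{1/2}$ together with the domain bookkeeping: in general one only has $(BS)^{*}\supseteq S^{*}B^{*}$, and the equality here rests on $M^{1/2}$ being bounded and everywhere-defined, a consequence of the two-sided bound $1/C_{2}\leq 1/\bar{p}\leq 1/C_{1}$. An essentially equivalent route avoids von Neumann's theorem: the sesquilinear form $\mathfrak{a}(u,v):=\braket{(1/\bar{p})\nabla_{w}u|\nabla_{w}v}$ on $W^{1}_{0,\mathbb{C}}(\Omega)$ is symmetric, nonnegative and closed --- its form norm is equivalent to the $W^{1}$-norm precisely because $1/\bar{p}$ is bounded above and below, and $W^{1}_{0,\mathbb{C}}(\Omega)$ is complete --- so the representation theorem for closed symmetric forms produces a self-adjoint operator, which one then checks coincides with $A$ by the same computation. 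In either approach no nontrivial estimate is required; the content lies entirely in the correct handling of the unbounded-operator domains.
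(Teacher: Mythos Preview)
Your argument is correct. The factorization $A=T^{*}T$ with $T=M^{1/2}\nabla_{\!0}$, the verification that $T$ is closed and densely-defined, and the domain identification $D(T^{*}T)=D(A)$ are all handled properly; the adjoint identity $(M^{1/2}\nabla_{\!0})^{*}=\nabla_{\!0}^{*}M^{1/2}$ is indeed valid because $M^{1/2}$ is bounded and everywhere-defined (your cautionary remark that ``in general one only has $(BS)^{*}\supseteq S^{*}B^{*}$'' is slightly off---with $B$ bounded on the \emph{left} one always has equality; the inclusion-only situation arises in other configurations---but this does not affect the argument).

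As for comparison: the paper does not give its own proof here but simply cites \cite[Theorem~4.0.9]{AkBe2008}, so there is nothing in the present text to compare against. Your von~Neumann route (and the equivalent closed-form route you sketch) is the standard mechanism for results of this type and is very likely what the cited reference does as well.
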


\begin{proof}
See \cite[Theorem 4.0.9]{AkBe2008}.
\end{proof}
\begin{theorem} \label{regularcases}
Let $\Omega$ be in addition bounded with a boundary of class 
$C^2$ and $\bar{p} \in C^1(\,\bar{\Omega},{\mathbb{R}})$. 
Then 
\begin{equation} \label{regularcases1}
D(A) = W^1_{0,{\mathbb{C}}}(\Omega) \cap  
W^2_{{\mathbb{C}}}(\Omega) \, \, .
\end{equation}
\end{theorem}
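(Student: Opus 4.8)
The plan is to prove the two inclusions separately; the inclusion $W^1_{0,{\mathbb{C}}}(\Omega) \cap W^2_{{\mathbb{C}}}(\Omega) \subseteq D(A)$ is the elementary one. First note that $A$ is only defined, via Definition~\ref{defA}, when $\bar p \in {\cal L}$, so we may and do assume $\bar p \in {\cal L} \cap C^1(\bar\Omega,{\mathbb{R}})$; being continuous on the compact set $\bar\Omega$, such a $\bar p$ is bounded away from $0$ and $\infty$, and hence $1/\bar p \in C^1(\bar\Omega,{\mathbb{R}})$ as well. If $u \in W^1_{0,{\mathbb{C}}}(\Omega) \cap W^2_{{\mathbb{C}}}(\Omega)$, then $\nabla_{w} u \in W^1_{{\mathbb{C}}}(\Omega)^n$, and the product rule (applicable since $1/\bar p \in C^1(\bar\Omega)$) gives $(1/\bar p)\nabla_{w} u \in W^1_{{\mathbb{C}}}(\Omega)^n$, so that its distributional divergence lies in $L^2_{{\mathbb{C}}}(\Omega)$. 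Integration by parts against $C^\infty_c(\Omega)$ functions, extended by density to all of $W^1_{0,{\mathbb{C}}}(\Omega)$, then shows $(1/\bar p)\nabla_{w} u \in D({\nabla_{\!0}}^{*})$, i.e.\ $u \in D(A)$.

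For the reverse inclusion $D(A) \subseteq W^1_{0,{\mathbb{C}}}(\Omega) \cap W^2_{{\mathbb{C}}}(\Omega)$, let $u \in D(A)$ and put $f := Au = {\nabla_{\!0}}^{*}(1/\bar p)\,\nabla_{w} u \in L^2_{{\mathbb{C}}}(\Omega)$. By definition of $D(A)$ we already have $u \in W^1_{0,{\mathbb{C}}}(\Omega)$, so only $u \in W^2_{{\mathbb{C}}}(\Omega)$ remains to be shown. Unwinding the definition of ${\nabla_{\!0}}^{*}$, the identity $Au=f$ says precisely that $u$ is a weak solution, in the standard variational sense, of the divergence-form Dirichlet problem $-\mathrm{div}\bigl((1/\bar p)\nabla u\bigr) = f$ on $\Omega$ with $u|_{\partial\Omega}=0$, whose leading coefficient $a := 1/\bar p$ is uniformly elliptic and of class $C^1(\bar\Omega)$. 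This is exactly the hypothesis of the classical global $W^2$-regularity theorem for second-order elliptic operators (e.g.\ Gilbarg--Trudinger, or Evans): a bounded $C^2$ domain and a Lipschitz leading coefficient force a weak solution with $L^2$ right-hand side into $W^2_{{\mathbb{C}}}(\Omega)$, which is the claim; real and imaginary parts are handled separately since $a$ is real-valued.

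If a self-contained argument is wanted instead of a citation, the plan is the Nirenberg difference-quotient method. Interior $W^2$-regularity is obtained by inserting into the weak formulation test functions built from second-order difference quotients of $u$ localized by a cutoff $\phi \in C^\infty_c(\Omega)$, using the ellipticity lower bound on $a$ together with the Lipschitz bound on $a$ to absorb the terms involving $\nabla a$. Near the boundary one flattens $\partial\Omega$ locally by a $C^2$ diffeomorphism --- this is where the $C^2$ hypothesis on $\partial\Omega$ is used, so that the transformed coefficients remain Lipschitz --- obtains the tangential second derivatives in $L^2$ from tangential difference quotients, and recovers the single missing pure-normal second derivative algebraically from the equation, $a\,\partial_{nn}u = -f - (\text{terms in the } \partial_i a \text{ and in } \partial_{ij}u \text{ with at most one normal derivative})$, using $a \geq \mathrm{const} > 0$. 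I expect the boundary step to be the main obstacle: organizing the flattening, keeping track of the lower-order contributions of $\nabla a$, and patching the local estimates via a partition of unity to conclude $u \in W^2_{{\mathbb{C}}}(\Omega)$ together with a bound of $\|u\|_{W^2}$ in terms of $\|f\|_{L^2}$ and $\|u\|_{W^1}$.
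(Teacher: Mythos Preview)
Your proposal is correct and takes essentially the same approach as the paper, which simply records that the statement is a consequence of elliptic regularity; you have merely supplied the details (the easy inclusion via the product rule, and the reverse via the standard global $W^2$ regularity theorem / Nirenberg difference-quotient argument). Nothing further is needed.
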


\begin{proof}
The statement is a simple consequence of elliptic regularity.
\end{proof}

The following statement will be used in the proof of
Theorem~\ref{thm:variationalFramework}.  Note that the domain of the
quadratic form $q_{\eta}$ is given by $D(A^{1/2})$, which is generally
larger than $D(A)$. The same result holds if the domain of $q_{\eta}$ is
restricted to $D(A)$. However, $D(A)$ depends heavily on the diffusivity,
whereas, according to \cite[Lemma 5.0.19]{AkBe2008}, 
$D(A^{1/2}) = W^1_{0,\mathbb{C}}(\Omega)$.

\begin{theorem} \label{variationalformulation} ({\bf Variational 
formulation})
Let $X$ be a non-trivial complex Hilbert space, $A : D(A) \rightarrow 
X$ be a densely-defined, linear, self-adjoint and strictly 
positive operator in $X$. Further, 
let $\eta \in X$ and 
$q_{\eta} : D(A^{1/2}) \rightarrow {\mathbb{R}}$ be defined by 
\begin{equation*}
q_{\eta}(\xi) := \braket{A^{1/2} \xi | A^{1/2} \xi} 
- \braket{\eta | \xi}
- \braket{\xi | \eta}
\end{equation*}
for every $\xi \in D(A^{1/2})$. Then $q_{\eta}$ assumes a unique 
minimum, of value 
\begin{equation*}
- \braket{\eta|A^{-1}\eta} \, \, , 
\end{equation*}
at $\xi = A^{-1} \eta$.
\end{theorem}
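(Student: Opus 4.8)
The plan is to exploit the self-adjointness and strict positivity of $A$ by completing the square in the functional $q_\eta$, using the unique strong/weak solution $\xi_0 := A^{-1}\eta$ (which exists and lies in $D(A) \subset D(A^{1/2})$ because $A^{1/2}$ is bijective onto $X$) as the candidate minimizer. First I would write, for arbitrary $\xi \in D(A^{1/2})$, the difference $q_\eta(\xi) - q_\eta(\xi_0)$ and show it equals $\braket{A^{1/2}(\xi-\xi_0)\,|\,A^{1/2}(\xi-\xi_0)} = \|A^{1/2}(\xi-\xi_0)\|^2 \ge 0$. Concretely, expand
\begin{align*}
q_\eta(\xi) &= \braket{A^{1/2}\xi\,|\,A^{1/2}\xi} - \braket{\eta\,|\,\xi} - \braket{\xi\,|\,\eta}\\
&= \braket{A^{1/2}(\xi-\xi_0)\,|\,A^{1/2}(\xi-\xi_0)} + 2\,\mathrm{Re}\braket{A^{1/2}\xi_0\,|\,A^{1/2}(\xi-\xi_0)} \\
&\quad + \braket{A^{1/2}\xi_0\,|\,A^{1/2}\xi_0} - \braket{\eta\,|\,\xi} - \braket{\xi\,|\,\eta}.
\end{align*}
The key identity to invoke is that, since $\xi_0 \in D(A)$ with $A\xi_0 = \eta$, one has $\braket{A^{1/2}\xi_0\,|\,A^{1/2}\zeta} = \braket{A\xi_0\,|\,\zeta} = \braket{\eta\,|\,\zeta}$ for every $\zeta \in D(A^{1/2})$ — this is exactly the weak-solution property recorded in the remark after Definition~\ref{weaksolutions}, and it requires the self-adjointness of $A^{1/2}$ together with $\xi_0 \in D(A) \subset D(A^{1/2})$.

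Applying this identity with $\zeta = \xi - \xi_0$ and with $\zeta = \xi_0$ collapses the cross terms and the constant terms: the $2\,\mathrm{Re}\braket{\eta\,|\,\xi-\xi_0}$ piece cancels against $-\braket{\eta\,|\,\xi}-\braket{\xi\,|\,\eta}$ up to the contribution $\braket{\eta\,|\,\xi_0}+\braket{\xi_0\,|\,\eta} = 2\,\mathrm{Re}\braket{\eta\,|\,\xi_0}$, while $\braket{A^{1/2}\xi_0\,|\,A^{1/2}\xi_0} = \braket{\eta\,|\,\xi_0} = \braket{\eta\,|\,A^{-1}\eta}$ (this last quantity is real and nonnegative by self-adjointness and positivity of $A^{-1}$). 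Collecting everything yields
\[
q_\eta(\xi) = \|A^{1/2}(\xi-\xi_0)\|^2 - \braket{\eta\,|\,A^{-1}\eta},
\]
which makes the value $-\braket{\eta\,|\,A^{-1}\eta}$ at $\xi=\xi_0$ transparent and shows $q_\eta(\xi) \ge q_\eta(\xi_0)$ for all $\xi \in D(A^{1/2})$.

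For uniqueness, I would observe that equality $q_\eta(\xi) = q_\eta(\xi_0)$ forces $\|A^{1/2}(\xi-\xi_0)\| = 0$, hence $A^{1/2}(\xi-\xi_0) = 0$, and then the injectivity of $A^{1/2}$ (a consequence of strict positivity) gives $\xi = \xi_0$. I do not anticipate a serious obstacle here; the only point needing care is the bookkeeping of complex conjugates in the sesquilinear form — one must keep the $\mathrm{Re}$ terms honest rather than silently assuming the Hilbert space is real — and the justification that $\xi_0 = A^{-1}\eta$ genuinely lies in $D(A^{1/2})$ so that all the inner products $\braket{A^{1/2}\xi_0\,|\,\cdot}$ are defined. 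Both are immediate from the standing hypotheses and from $D(A) \subseteq D(A^{1/2})$, which holds for any nonnegative self-adjoint operator via the spectral theorem.
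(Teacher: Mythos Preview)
The paper states Theorem~\ref{variationalformulation} in the Preliminaries section without supplying a proof; it is recorded there as a known background result to be invoked in Lemma~\ref{thm:variationalFramework}. So there is no proof in the paper to compare against.

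That said, your argument is correct and is the standard one. The completing-the-square identity
\[
q_\eta(\xi) = \|A^{1/2}(\xi-\xi_0)\|^2 - \braket{\eta\,|\,A^{-1}\eta}, \qquad \xi_0 := A^{-1}\eta,
\]
follows exactly as you describe, and your justification of the key step $\braket{A^{1/2}\xi_0\,|\,A^{1/2}\zeta} = \braket{\eta\,|\,\zeta}$ for $\zeta \in D(A^{1/2})$ is precisely the weak-solution identity noted in the remark after Definition~\ref{weaksolutions}. The uniqueness argument via injectivity of $A^{1/2}$ is also fine. Your caveats about handling the complex sesquilinear structure and about $\xi_0 \in D(A) \subset D(A^{1/2})$ are appropriate and correctly addressed.
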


\subsection{Variational formulation}

In the following, we show the strong convergence of
$\left(A_{\bar{p}_\nu}^{-1}\right)_{\nu \in \mathbb{N}^\ast}$ for a
monotonically decreasing sequence $\bar{p}_1, \bar{p}_2, \ldots$ in
$\mathcal{L}$. 

\begin{lemma} \label{thm:variationalFramework}
Let ${\bar{p}}_{1}, {\bar{p}}_{2}, \dots $ be a monotonically 
decreasing, i.e., such that for every $\nu \in {\mathbb{N}}^{*}$
the inequality 
${\bar{p}}_{\nu + 1}(x) \leq {\bar{p}}_{\nu}(x)$ holds for almost 
all  
$x \in \Omega$, 
sequence in ${\cal L}$. 
In addition, let 
$A_1,A_2,\dots$ be the associated 
sequence of self-adjoint
linear operators. Then the sequence $A_1^{-1},A_2^{-1},\dots$
is strongly convergent to a positive bounded self-adjoint linear 
operator on $L^2_{\mathbb{C}}(\Omega)$.
\end{lemma}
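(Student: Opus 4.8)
The plan is to exploit the variational characterization in Theorem~\ref{variationalformulation} to obtain monotone convergence of the quadratic forms $\braket{\eta|A_\nu^{-1}\eta}$ and then to upgrade this scalar (in fact bilinear) convergence to strong operator convergence using self-adjointness, positivity, and uniform boundedness.

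First I would fix $\eta \in L^2_{\mathbb{C}}(\Omega)$ and apply Theorem~\ref{variationalformulation} to each $A_\nu$: since $\bar p_\nu \in \mathcal L$ induces a densely-defined, self-adjoint, strictly positive operator (Theorem~\ref{secondorderoperator} together with the lower bound on $\bar p_\nu$), the functional $q_\eta^{(\nu)}(\xi) := \braket{A_\nu^{1/2}\xi|A_\nu^{1/2}\xi} - \braket{\eta|\xi} - \braket{\xi|\eta}$ on $D(A_\nu^{1/2}) = W^1_{0,\mathbb{C}}(\Omega)$ attains its minimum $-\braket{\eta|A_\nu^{-1}\eta}$ at $\xi = A_\nu^{-1}\eta$. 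The crucial observation is that the quadratic part equals $\int_\Omega (1/\bar p_\nu)\,|\nabla_w \xi|^2$, which is \emph{pointwise monotonically increasing} in $\nu$ (because $\bar p_{\nu+1}\le \bar p_\nu$ a.e. means $1/\bar p_{\nu+1}\ge 1/\bar p_\nu$ a.e.), and the domain $W^1_{0,\mathbb{C}}(\Omega)$ is the \emph{same} for all $\nu$. Hence $q_\eta^{(\nu+1)}(\xi) \ge q_\eta^{(\nu)}(\xi)$ for every fixed $\xi$, so the minima satisfy $-\braket{\eta|A_{\nu+1}^{-1}\eta} \ge -\braket{\eta|A_\nu^{-1}\eta}$, i.e. the sequence $\braket{\eta|A_\nu^{-1}\eta}$ is monotonically nonincreasing in $\nu$; it is bounded below by $0$ since each $A_\nu^{-1}$ is positive. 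Therefore $\braket{\eta|A_\nu^{-1}\eta}$ converges for every $\eta$. By polarization, $\braket{\eta|A_\nu^{-1}\zeta}$ converges for all $\eta,\zeta$, and the limit defines a bounded (the uniform bound $\|A_\nu^{-1}\| \le C_P \sup_\nu \|\bar p_\nu\|_\infty$ follows from the Poincaré inequality and the upper bound on $\bar p_1 \ge \bar p_\nu$, via $\braket{\eta|A_\nu^{-1}\eta} \le \|A_\nu^{-1}\|\,\|\eta\|^2$ and the Lax--Milgram-type coercivity estimate), positive, self-adjoint sesquilinear form; call the associated bounded operator $B$, so $A_\nu^{-1} \to B$ weakly, and $B$ is positive and self-adjoint.

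Next I would upgrade weak convergence to strong convergence. The standard trick: for positive self-adjoint $T_\nu := A_\nu^{-1}$ with $T_\nu \to B$ weakly and $T_\nu \le T_1$ (which again follows from $1/\bar p_\nu \ge 1/\bar p_1$ and the variational comparison, giving the operator inequality $A_\nu \ge A_1$ hence $A_\nu^{-1} \le A_1^{-1}$), one writes, for fixed $\eta$,
\[
\|T_\nu \eta - B\eta\|^2 \le \|(T_\nu - B)^{1/2}\|^2 \,\braket{(T_\nu - B)\eta\,|\,\eta} \longrightarrow 0,
\]
using that $T_\nu - B$ is a \emph{monotonically decreasing} sequence of positive operators (monotone by the same comparison argument, positive since $B = \inf_\nu T_\nu$ in the form sense is $\le T_\nu$), so $\|T_\nu - B\| \le \|T_1 - B\|$ is uniformly bounded, while $\braket{(T_\nu-B)\eta|\eta} = \braket{T_\nu\eta|\eta} - \braket{B\eta|\eta} \to 0$ by the scalar convergence already established. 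This gives $A_\nu^{-1}\eta \to B\eta$ in norm for every $\eta$, i.e. strong convergence, and $B$ is the desired positive bounded self-adjoint limit operator.

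The main obstacle, and the step deserving the most care, is establishing the operator inequality $A_{\nu+1} \ge A_\nu$ (equivalently $A_{\nu+1}^{-1} \le A_\nu^{-1}$) rigorously from the pointwise inequality $\bar p_{\nu+1} \le \bar p_\nu$: one must argue at the level of the closed quadratic forms $u \mapsto \int_\Omega (1/\bar p_\nu)|\nabla_w u|^2$ with common form domain $W^1_{0,\mathbb{C}}(\Omega) = D(A_\nu^{1/2})$ (this identification of the form domain, independent of $\nu$, is exactly \cite[Lemma 5.0.19]{AkBe2008} quoted above), invoke the correspondence between monotonicity of closed positive forms and monotonicity of the associated self-adjoint operators, and then pass to inverses. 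Care is also needed to verify the uniform norm bound on $A_\nu^{-1}$ and to confirm that the weak limit $B$ genuinely agrees with the form-theoretic infimum so that $T_\nu - B \ge 0$; both follow from the monotone-form machinery but should be stated cleanly.
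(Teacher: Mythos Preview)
Your proposal is correct and follows essentially the same route as the paper: use Theorem~\ref{variationalformulation} to show that $\nu\mapsto\braket{f|A_\nu^{-1}f}$ is monotonically decreasing, hence $(A_\nu^{-1})$ is a decreasing sequence of positive bounded self-adjoint operators and therefore strongly convergent. The only difference is cosmetic: the paper stops at the sentence ``a monotonically decreasing sequence of positive bounded self-adjoint operators \dots\ is strongly convergent,'' treating this as a known fact (Vigier-type theorem), whereas you spell out the proof via polarization and the square-root estimate $\|(T_\nu-B)\eta\|^2\le\|T_\nu-B\|\,\braket{(T_\nu-B)\eta|\eta}$. Your closing worry about rigorously establishing $A_{\nu+1}\ge A_\nu$ is overcautious: since the form domains coincide ($D(A_\nu^{1/2})=W^1_{0,\mathbb{C}}(\Omega)$ for all $\nu$), the pointwise inequality $1/\bar p_{\nu+1}\ge 1/\bar p_\nu$ gives the form inequality directly, which is exactly the operator inequality $A_{\nu+1}^{-1}\le A_\nu^{-1}$ you need---no further ``monotone-form machinery'' is required.
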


\begin{proof}
For this, let $\nu \in {\mathbb{N}}^{*}$. Further, 
let $f \in L^2_{\mathbb{C}}(\Omega)$ and 
$q_{\nu, f} : W^{1}_{0,{\mathbb{C}}}({\Omega}) 
\rightarrow {\mathbb{R}}$ be defined by 
\begin{align*}
q_{\nu,f}(u) := & \braket{A_{\nu}^{1/2} u | A_{\nu}^{1/2} u}_{2} 
- \braket{f | u}_{2}
- \braket{u | f}_{2} \\
= & \braket{\,{\nabla_{w}} u \, | \, (1 / {\bar{p}}_{\nu} ) 
{\nabla_{w}} u \, }_{2,n} - \braket{f | u}_{2}
- \braket{u | f}_{2}
\end{align*}
for every $u \in  W^{1}_{0,{\mathbb{C}}}({\Omega}) $.
\begin{equation*}
\|A_{\nu}^{1/2} f\|_{2}^2 = 
\braket{\,{\nabla_{w}} f \, | \, ( 1 / {\bar{p}}_{\nu} ) 
{\nabla_{w}} f \, }_{2,n} 
\end{equation*}
for every $f \in 
W^{1}_{0,{\mathbb{C}}}({\Omega})$. According 
to Theorem~\ref{variationalformulation}, 
$q_{\nu,f}$ assumes a unique 
minimum, of value 
\begin{equation*}
- \braket{f|A_{\nu}^{-1}f}_{2} \, \, , 
\end{equation*}
at $u_{\nu} := A_{\nu}^{-1} f$. As a consequence, 
since ${\bar{p}}_{1}, {\bar{p}}_{2},\dots $ is monotonically 
decreasing, it follows that 
\begin{equation*}
q_{\nu + 1,f}(u) \geq q_{\nu,f}(u) 
\end{equation*}
for every $u \in  W^{1}_{0,{\mathbb{C}}}({\Omega})$
and hence that 
\begin{equation*} 
q_{\nu + 1,f}(u_{\nu + 1}) \geq q_{\nu,f}(u_{\nu + 1}) \geq 
q_{\nu,f}(u_{\nu}) \, \, . 
\end{equation*}
Hence it follows that 
\begin{equation*}
\braket{f|A_{\nu + 1}^{-1}f}_{2} \leq \braket{f|A_{\nu}^{-1}f}_{2} 
\, \, .
\end{equation*}
From the last, it follows that $A_{1}^{-1},A_{2}^{-1},\dots$
is a monotonically decreasing sequence of positive bounded 
self-adjoint operators on $L^2_{\mathbb{C}}(\Omega)$ 
and as such strongly convergent to
a positive bounded 
self-adjoint operator on $L^2_{\mathbb{C}}(\Omega)$. 
\end{proof}

\subsection{Generalization of Lions' Lemma}
We provide the structures satisfying the assumptions of
Lemma~\ref{lemma:LionsAbstract} for treating the diffusion
equation~\eqref{diffusionEquation2}.

\begin{theorem} \label{thm:generalizationLions}
Let $\Omega$ be a non-void bounded open subset of $\mathbb{R}^n$ 
with boundary of class $C^2$, 
$\Omega_2 \subset {\mathbb{R}}^n$ be such that 
${\bar{\Omega}}_2 \subset \Omega$ and with boundary of 
class $C^2$. We define the closed subspace 
$X_1$ of $W^1_{0,{\mathbb{C}}}({\Omega})$ by the range of the 
isometric imbedding $\iota$ of 
$W^{1}_{0, \mathbb{C}}({\Omega_2})$ into 
$W^{1}_{0, \mathbb{C}}({\Omega})$ given by $\iota(f) := \hat{f}$
for every $f \in 
W^{1}_{0, \mathbb{C}}({\Omega_2})$, where
\begin{equation*}
{\hat{f}}(x) := \begin{cases}
f(x) &\text{if $x \in {\Omega_2}$} \\
\, \, 0 &\text{if $x \in \Omega \, \setminus \, {\Omega_2}$}  \, \,. 
\end{cases}
\end{equation*}
In addition, let $p_1$ and $p_2$ be a.e. positive 
elements of $L^{\infty}_{\mathbb{C}}(\Omega)$ that vanish almost 
everywhere on $\Omega_2$ and $\Omega_1$, respectively, satisfy 
$p_1|{\Omega_1} \in C^1({\bar{\Omega}}_1,{\mathbb{R}})$, 
$p_2|{\Omega_2} \in C^1({\bar{\Omega}}_2,{\mathbb{R}})$ and 
for which there are $\alpha_1,\alpha_2 > 0$ such that 
$p_j \geq \alpha_j$ almost everywhere on ${\Omega}_j$, 
$j \in \{1,2\}$.
Finally, let 
$s_{j} : (W^1_{0,{\mathbb{C}}}({\Omega}))^2 \rightarrow {\mathbb{C}}$
be defined by 
\begin{equation*}
s_{j}(f,g) := \braket{\,{\nabla_{w}} f \, | \, p_j 
{\nabla_{w}} g \, }_{2,n}
\end{equation*}
for all $f,g \in  W^1_{0,{\mathbb{C}}}({\Omega})$ and 
$j \in \{1,2\}$. Then $W^1_{0,{\mathbb{C}}}({\Omega})$, $X_1$, 
$s_1$, $s_2$ satisfy the General~Assumption~\ref{lionsass}.
\end{theorem}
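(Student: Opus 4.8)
The plan is to verify, clause by clause, that $X:=W^1_{0,{\mathbb{C}}}({\Omega})$, $X_1$, $s_1$, $s_2$ fulfil every requirement of General~Assumption~\ref{lionsass}; following Lions, these amount to: (a)~$s_1,s_2$ are bounded, Hermitian, positive sesquilinear forms on $X$; (b)~$X_1$ is a closed subspace of $X$ that coincides with the null space $\{\xi\in X:s_1(\xi,\xi)=0\}$ of the positive bounded self-adjoint operator associated with $s_1$; (c)~$s_1+s_2$ is coercive on $X$; (d)~$s_2$ is coercive on $X_1$; and, if required, (e)~$s_1$ is coercive on $X_1^{\perp}$. The geometric and regularity data of the theorem transfer verbatim or, where only ``$\partial\Omega_2$ is a Lebesgue null set'' is needed, follow from the $C^2$-hypothesis; the pointwise $C^1$-regularity of $p_1,p_2$ plays no role in this verification.

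Clauses (a), (c), (d) are direct estimates. For (a): since each $p_j\in L^{\infty}_{\mathbb{C}}(\Omega)$ is real-valued and a.e.\ nonnegative, Cauchy--Schwarz in $L^2$ gives $|s_j(f,g)|\leq\|p_j\|_{\infty}\,\|{\nabla_{w}}f\|_{2,n}\|{\nabla_{w}}g\|_{2,n}$, bounded by a multiple of $\|f\|_X\|g\|_X$, while $s_j(f,g)=\overline{s_j(g,f)}$ and $s_j(f,f)=\braket{{\nabla_{w}}f|p_j{\nabla_{w}}f}_{2,n}\geq 0$. For (c): since $\Omega_1=\Omega\setminus\overline{\Omega}_2$, the set $\Omega\setminus(\Omega_1\cup\Omega_2)$ lies in $\partial\Omega_2$ and is hence null, so $p_1+p_2\geq\min(\alpha_1,\alpha_2)>0$ a.e.\ on $\Omega$ and thus $(s_1+s_2)(f,f)\geq\min(\alpha_1,\alpha_2)\|{\nabla_{w}}f\|_{2,n}^{2}$, which the Poincar\'{e} inequality on the bounded domain $\Omega$ bounds below by a positive multiple of $\|f\|_X^{2}$. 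For (d): for $\xi=\iota(f)=\hat f\in X_1$ we have $s_2(\xi,\xi)=\int_{\Omega_2}p_2\,|{\nabla_{w}}f|^{2}\geq\alpha_2\int_{\Omega_2}|{\nabla_{w}}f|^{2}\geq c\,\|f\|_{W^1_{0,{\mathbb{C}}}({\Omega_2})}^{2}=c\,\|\xi\|_X^{2}$ by the Poincar\'{e} inequality on the bounded domain $\Omega_2$ and the fact that $\iota$ is an isometry.

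The substantive point is (b) (and (e), if present). That $X_1$ is closed is part of its construction. The inclusion $X_1\subseteq\{s_1(\xi,\xi)=0\}$ is easy: for $\xi=\hat f\in X_1$ the function $\hat f$ vanishes identically on the open set $\Omega_1$, hence ${\nabla_{w}}\hat f=0$ a.e.\ there, and since $p_1$ vanishes a.e.\ on $\Omega_2$ this yields $s_1(\xi,\xi)=\int_{\Omega_1}p_1|{\nabla_{w}}\xi|^{2}=0$ (and then $s_1(\xi,\cdot)\equiv 0$ by positivity). For the reverse inclusion: $s_1(\xi,\xi)=0$ combined with $p_1\geq\alpha_1$ a.e.\ on $\Omega_1$ forces ${\nabla_{w}}\xi=0$ a.e.\ on $\Omega_1$, so $\xi$ is locally constant on $\Omega_1$; as $\xi$ has vanishing trace on $\partial\Omega$ and --- in the configuration under consideration --- every connected component of $\Omega_1$ reaches $\partial\Omega$, it follows that $\xi=0$ a.e.\ on $\Omega_1$, whence $\xi=\iota(\xi|_{\Omega_2})\in X_1$. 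Clause (e), if needed, follows by a Rellich--Kondrachov contradiction argument: a normalised sequence $(\xi_k)$ in $X_1^{\perp}$ with $s_1(\xi_k,\xi_k)\to 0$ has a weakly convergent subsequence whose limit lies in $X_1\cap X_1^{\perp}=\{0\}$ by (b); then $\|\xi_k\|_{L^2(\Omega_1)}$ and $\int_{\Omega_1}|{\nabla_{w}}\xi_k|^{2}$ tend to $0$, and an interior elliptic (trace) estimate on $\Omega_2$, legitimate by the $C^2$-regularity of $\partial\Omega_2$, forces $\int_{\Omega_2}|{\nabla_{w}}\xi_k|^{2}\to 0$ as well, contradicting $\|\xi_k\|_X=1$.

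I expect the reverse inclusion in (b) --- and clause (e) --- to be the only steps demanding genuine care; both rest on the geometric fact that a $W^1_{0,{\mathbb{C}}}({\Omega})$-function locally constant on the collar region $\Omega_1$ must vanish there, and one must in particular watch the connectedness of the components of $\Omega_1$ relative to $\partial\Omega$: an ``island with a lagoon'' would enlarge the null space of $s_1$ beyond $X_1$, so the geometric hypotheses of the theorem must be read as excluding that situation.
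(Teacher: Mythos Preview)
Your verification of boundedness, Hermiticity, positivity, the joint coercivity (your~(c)~$=$~the paper's~(i)), the coercivity of $s_2$ on $X_1$ (your~(d)~$=$~(iii)), and the vanishing $s_1(X_1,\cdot)=0$ (the first half of~(ii)) match the paper's argument essentially line for line.

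The substantive divergence is in how you handle condition~(ii). The paper's~(ii.2) is \emph{not} the statement $X_1=\{s_1(\xi,\xi)=0\}$; it is a representation condition: every bounded linear functional $\omega$ annihilating $X_1$ must be of the form $s_1(\xi,\cdot)$, and any two representatives differ by an element of $X_1$. Your~(b) yields the uniqueness half of this (if $s_1(\xi,\cdot)=s_1(\xi',\cdot)$ then $s_1(\xi-\xi',\xi-\xi')=0$, so $\xi-\xi'\in X_1$), but the \emph{existence} half is equivalent to the range of the associated operator $T_1$ being all of $X_1^{\perp}$, i.e.\ precisely to your clause~(e). So~(e) is not optional; without it the existence part of~(ii.2) is simply unproved.

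The paper establishes~(ii.2) by a completely different route. It does not identify the null set of $s_1$ at all. Instead, given $\omega$ with $\ker\omega\supset X_1$, it uses Riesz representation for the \emph{coercive} form $s=s_1+s_2$ to produce $f$ with $\omega=s(f,\cdot)$; then, testing against $\hat g$ with $g\in W^1_{0,\mathbb{C}}(\Omega_2)\cap W^2_{\mathbb{C}}(\Omega_2)$ and invoking Theorem~\ref{regularcases} (elliptic regularity: the diffusion operator on $\Omega_2$ with coefficient $p_2|_{\Omega_2}\in C^1(\bar\Omega_2)$ has domain $W^1_0\cap W^2$ and hence dense range), it concludes $f|_{\Omega_2}=0$, whence $s_2(f,\cdot)=0$ and $\omega=s_1(f,\cdot)$. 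Uniqueness modulo $X_1$ is handled symmetrically on $\Omega_1$, again via Theorem~\ref{regularcases}. So the $C^1$~regularity of $p_1,p_2$ is \emph{exactly} what drives the paper's proof of~(ii.2); your remark that it ``plays no role'' is incorrect for the paper's argument, even if your alternative route (via~(e)) might avoid it.

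Finally, your sketch of~(e) is too thin as written. The step ``an interior elliptic (trace) estimate on $\Omega_2$ forces $\int_{\Omega_2}|\nabla_w\xi_k|^2\to 0$'' needs the observation you do not state: with the gradient inner product on $W^1_{0,\mathbb{C}}(\Omega)$, orthogonality to $X_1$ means $\int_{\Omega_2}\nabla\xi_k\cdot\overline{\nabla g}=0$ for all $g\in W^1_{0,\mathbb{C}}(\Omega_2)$, i.e.\ $\xi_k|_{\Omega_2}$ is harmonic; only then does control of the trace on $\partial\Omega_2$ (coming from $\xi_k|_{\Omega_1}\to 0$ in $W^1$) control the full $W^1(\Omega_2)$ norm via the harmonic extension bound. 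Without harmonicity there is no such estimate.
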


\begin{proof}
By
\begin{equation*}
\|f\| := \left(\,\sum_{k=1}^n \|\partial^{k} f \|_2^2 \right)^{1/2} 
\end{equation*}
for all $f \in W^{1}_{0, \mathbb{C}}(\Omega)$, there is defined 
a norm $\|\,\|$ on 
$W^{1}_{0, \mathbb{C}}(\Omega)$ that is equivalent to 
$\vvvert \, \vvvert_{1}$. Therefore, without restriction, we 
can assume in the following that $W^{1}_{0, \mathbb{C}}(\Omega)$ 
is equipped with the norm  $\|\,\|$. Further, by use of 
the inequalities
\begin{align*}
& |s_{j}(f,g)| = |\braket{\,{\nabla_{w}} f \, | \, p_j 
{\nabla_{w}} g \, }_{2,n}| = \left|
\sum_{k=1}^n \braket{ \partial^{e_k} f | p_j \partial^{e_k} g}_2 \right| 
\\
& \leq \|p_{j}\|_{\infty} \sum_{k=1}^n \|\partial^{e_k} f\|_2
\|\partial^{e_k} g\|_2 \leq 
\|p_{j}\|_{\infty} \, \| f \| \, \| g \|
\end{align*}
for all $f, g \in W^1_{0,{\mathbb{C}}}({\Omega})$ and 
$j \in \{1,2\}$, it follows that 
$s_1$ and $s_2$ are bounded Hermitean positive sesquilinear forms. 
In addition, it follows that   
\begin{align} \label{s1pluss2posdef}
& s_1(f,f) + s_2(f,f) =  \braket{\,{\nabla_{w}} f \, | \, (p_1 + p_2) 
{\nabla_{w}} f \, }_{2,n} = 
\sum_{k=1}^n \braket{ \partial^{e_k} f | (p_1+p_2) \partial^{e_k} f}_2
\nonumber \\
& \geq \alpha  \sum_{k=1}^n \|\partial^{e_k} f\|_2^2 =
\alpha \, \|f\|^2
\end{align}
for every $f \in W^1_{0,{\mathbb{C}}}({\Omega})$, where $\alpha := 
\min\{\alpha_1,\alpha_2\}$. Further, it follows for 
$f \in X_1$ that  
\begin{equation*}
s_2(f,f) = \braket{\,{\nabla_{w}} f \, | \, p_2
{\nabla_{w}} f \, }_{2,n} \geq C_2 
\braket{\,{\nabla_{w}} f \, | \,
{\nabla_{w}} f \, }_{2,n} = C_2 \, \|f\|^2 
\end{equation*}
and 
\begin{equation*}
s_1(f,g) = \braket{\,{\nabla_{w}} f \, | \, p_1
{\nabla_{w}} g \, }_{2,n} = 0 
\end{equation*}
for every $g \in W^1_{0,{\mathbb{C}}}({\Omega})$. Further, 
we note that $s := s_1 + s_2$, as sum of two bounded 
Hermitean positive sesquilinear forms, is a bounded Hermitean 
positive sesquilinear form. In addition, as a consequence 
of (\ref{s1pluss2posdef}), $s$ is positive definite 
and hence a scalar product for $W^1_{0,{\mathbb{C}}}({\Omega})$. 
Also
\begin{align*}
& s(f,f) =  \braket{\,{\nabla_{w}} f \, | \, (p_1 + p_2) 
{\nabla_{w}} f \, }_{2,n} = 
\sum_{k=1}^n \braket{ \partial^{e_k} f | (p_1+p_2) \partial^{e_k} f}_2 
\\
& \leq \max\{\,\|p_1\|_{\infty}, \|p_2\|_{\infty}\} 
\sum_{k=1}^n \|\partial^{e_k} f\|_2^2 =
\max\{\,\|p_1\|_{\infty}, \|p_2\|_{\infty}\} \, \|f\|^2  \, \, , 
\end{align*}
for every $f \in W^1_{0,{\mathbb{C}}}({\Omega})$. Hence it follows 
by (\ref{s1pluss2posdef}) the equivalence of the norm that is induced 
on $W^1_{0,{\mathbb{C}}}({\Omega})$ by $s$ and 
$\|\,\|$. As a consequence, 
for $\omega \in L(W^1_{0,{\mathbb{C}}}({\Omega}),{\mathbb{C}})$, there 
is a unique $f \in W^1_{0,{\mathbb{C}}}({\Omega})$ such that 
\begin{equation*}
\omega = s_1(f,\cdot) + s_2(f,\cdot) \, \, . 
\end{equation*}
In particular, if $\ker \omega \supset X_1$, this implies that
\begin{align*}
& 0 = \omega(\hat{g}) = s_1(f,\hat{g}) + s_2(f,\hat{g}) = 
s_2(f,\hat{g}) = 
\braket{\,{\nabla_{w}} f \, | \, p_2 
{\nabla_{w}} \hat{g} \, }_{2,n} \\
& = \braket{\, f \, | \, 
{\nabla_{0}}^{*} p_2 
{\nabla_{w}} \hat{g} \, }_{2,n}
=
\braket{\,
(f|_{\Omega_2}) \, | \, {\nabla_{0,\Omega_2}}^{*}
(\,p_2|_{\Omega_2}) 
{\nabla_{w,\Omega_2}} g \, }_{2,n,\Omega_2}
\end{align*} 
for every $g \in W^{1}_{0, \mathbb{C}}({\Omega_2}) \cap 
W^{2}_{\mathbb{C}}({\Omega_2})$, where 
an index $\Omega_2$ indicates the association of structures 
to $\Omega_2$, instead of $\Omega$. Since, according to 
Theorem~\ref{regularcases},  
\begin{equation*}
\{\,{\nabla_{0,\Omega_2}}^{*} 
(\,p_2|_{\Omega_2}) 
{\nabla_{w,\Omega_2}} g \in L^{2}_{\mathbb{C}}({\Omega_2})
: g \in W^{1}_{0, \mathbb{C}}({\Omega_2}) \cap 
W^{2}_{\mathbb{C}}({\Omega_2}) \, 
\}
\end{equation*}
is dense in $L^{2}_{\mathbb{C}}({\Omega_2})$, the last implies that 
$f$ vanishes a.e. on $\Omega_2$ and hence that 
\begin{equation*}
\omega = s_1(f,\cdot) \, \, . 
\end{equation*}
In addition, if $g \in W^{1}_{0, \mathbb{C}}({\Omega})$ is such that 
\begin{equation*}
\omega = s_1(g,\cdot) \, \, ,
\end{equation*}
it follows that 
\begin{align*}
& 0 = 
\braket{\,{\nabla_{w}} (f-g) \, | \, p_1 
{\nabla_{w}} \hat{h} \, }_{2,n} = 
\braket{\, f-g \, | \, {\nabla_{0}}^{*} p_1 
{\nabla_{w}} \hat{h} \, }_{2,n} \\
& =
\braket{\,
(f-g)|_{\Omega_1} \, | \, {\nabla_{0,\Omega_1}}^{*}
(\,p_1|_{\Omega_1}) 
{\nabla_{w,\Omega_1}} h \, }_{2,n,\Omega_1}
\end{align*} 
for every $h \in W^{1}_{0, \mathbb{C}}({\Omega_1}) \cap 
W^{2}_{\mathbb{C}}({\Omega_1})$, where 
an index $\Omega_1$ indicates the association of structures 
to $\Omega_1$, instead of $\Omega$. Since, according to 
Theorem~\ref{regularcases},   
\begin{equation*}
\{\,{\nabla_{0,\Omega_1}}^{*} 
(\,p_1|_{\Omega_1}) 
{\nabla_{w,\Omega_1}} h \in L^{2}_{\mathbb{C}}({\Omega_1})
: h \in W^{1}_{0, \mathbb{C}}({\Omega_1}) \cap 
W^{2}_{\mathbb{C}}({\Omega_1}) \, 
\}
\end{equation*}
is dense in $L^{2}_{\mathbb{C}}({\Omega_1})$, the last implies that 
$f - g$ vanishes a.e. on $\Omega_1$ and hence that 
$f - g \in X_1$.
\end{proof}

We give a concrete example of the application of Lions'
Lemma~\ref{basiclemmalions} to the diffusion
equation~\eqref{diffusionEquation2}.

\begin{corollary}
Let $f \in W^{1}_{0, \mathbb{C}}({\Omega})$, $\varepsilon > 0$,
$k \in {\mathbb{N}} \cup \{-1\}$, and $X_1$ as in
Theorem~\ref{thm:generalizationLions}. Restriction of a function to
$\Omega_i$ is indicated by an addition of an index $i$.
\begin{itemize}
\item[(i)] There is a unique $u_{\varepsilon} \in W^{1}_{0, \mathbb{C}}({\Omega})$
such that 
\begin{equation*}
A u_{\varepsilon} = f \,.
\end{equation*} 

\item[(ii)] There is $C > 0$ such that 
\begin{equation*}
\left\vvvert \sum_{j=-1}^{k} \varepsilon^{j} u_{j} - u_{\varepsilon} \, 
\right\vvvert_{1} \leq C \varepsilon^{k+1}  \, \, , 
\end{equation*} 
where $u_{-1} \in X_{1}$ and $u_{0},\dots,u_{k} \in W^{1}_{0, \mathbb{C}}({\Omega})$
are uniquely determined by 

$$
\begin{array}{lllllll}
- {\nabla_{0}}^{*} p_{2 2} {\nabla_{w}} u_{-1 2} & = & f_2 \, , & & \\
- {\nabla_{0}}^{*} p_{1 1} {\nabla_{w}} u_{0 1} & = & 0, &
\left( \frac{\partial u_{0 1}}{\partial \nu} - 
\frac{\partial u_{-1 2}}{\partial \nu} \right) \bigg|_{\partial \Omega_2} & = & 0, 
& u_{0 2} = 0 \, ,\\
- {\nabla_{0}}^{*} p_{1 1} {\nabla_{w}} u_{j 1} & = & 0, & 
\left( \frac{\partial u_{j 1}}{\partial \nu} - 
\frac{\partial u_{(j-1)2}}{\partial \nu} \right) \bigg|_{\partial \Omega_2} & = & 
0\, , & \\
- {\nabla_{0}}^{*} p_{2 2} {\nabla_{w}} u_{j 2} & = & 0,&
\left( u_{j 2} - u_{(j-1) 1} \right) \big|_{\partial \Omega_2} & = & 0\, , & 
\end{array}
$$
where $j \in \{1,\dots,k\}$.
\end{itemize}
\end{corollary}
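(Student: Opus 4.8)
The plan is to obtain both parts from the abstract Lemmas~\ref{basiclemmalions} and~\ref{lemma:LionsAbstract} of the appendix, once Theorem~\ref{thm:generalizationLions} is used to certify that $W^1_{0,\mathbb{C}}(\Omega)$, $X_1$, $s_1$, $s_2$ meet General~Assumption~\ref{lionsass}, and then to rewrite the abstract output as the displayed boundary value problems. As a preliminary identification, note that $p_1$ and $p_2$ are a.e.\ positive, essentially bounded, and vanish a.e.\ off $\Omega_1$ and $\Omega_2$ respectively, with $p_j\geq\alpha_j$ a.e.\ on $\Omega_j$, so that $\min\{\alpha_1,\varepsilon\alpha_2\}\leq p_1+\varepsilon p_2\leq\|p_1\|_\infty+\varepsilon\|p_2\|_\infty$ a.e.\ on $\Omega$ ($\Omega$ and $\Omega_1\cup\Omega_2$ differing only by the null set $\Omega\cap\partial\Omega_2$). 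Hence $\bar p_\varepsilon:=1/(p_1+\varepsilon p_2)\in\mathcal{L}$, the operator $A$ in (i) is $A=A_{\bar p_\varepsilon}$, and its quadratic form on $W^1_{0,\mathbb{C}}(\Omega)$ is exactly $s_1+\varepsilon s_2$ (as in the evaluation of $\|A_\nu^{1/2}\cdot\|_2^2$ in the proof of Lemma~\ref{thm:variationalFramework}).

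For part~(i): by Theorem~\ref{secondorderoperator}, $A=A_{\bar p_\varepsilon}$ is densely defined and self-adjoint; by \eqref{s1pluss2posdef} one has $s_1(f,f)+\varepsilon s_2(f,f)\geq\min\{1,\varepsilon\}\,\alpha\,\|f\|^2$, so $A$ is strictly positive and $A^{-1}$ is bounded on $L^2_{\mathbb{C}}(\Omega)$; thus $u_\varepsilon:=A^{-1}f$ is the unique element of $D(A)\subset W^1_{0,\mathbb{C}}(\Omega)$ with $A u_\varepsilon=f$. Equivalently, Lemma~\ref{basiclemmalions}---applicable by Theorem~\ref{thm:generalizationLions}---gives $u_\varepsilon$ directly as the unique solution of \eqref{sesquilinearEquation} with $\eta$ the Riesz representative in $W^1_{0,\mathbb{C}}(\Omega)$ of the bounded functional $\xi\mapsto\braket{\xi|f}_{2}$; the resulting identity $s_1(\xi,u_\varepsilon)+\varepsilon s_2(\xi,u_\varepsilon)=\braket{\xi|f}_{2}$ for all $\xi$ is the weak form of $Au_\varepsilon=f$, which coincides with the strong form by the Remark following Definition~\ref{weaksolutions}.

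For part~(ii): apply Lemma~\ref{lemma:LionsAbstract} to the same structures. It yields the stated estimate together with $u_{-1}\in X_1$ and $u_0,\dots,u_k\in W^1_{0,\mathbb{C}}(\Omega)$ solving the recursion obtained by matching powers of $\varepsilon$ in $s_1(\xi,u_\varepsilon)+\varepsilon s_2(\xi,u_\varepsilon)=\braket{\xi|f}_{2}$ against $u_\varepsilon=\varepsilon^{-1}u_{-1}+u_0+\varepsilon u_1+\cdots$: namely $u_{-1}\in X_1$ with $s_2(\xi,u_{-1})=\braket{\xi|f}_{2}$ for all $\xi\in X_1$, then $s_1(\xi,u_0)+s_2(\xi,u_{-1})=\braket{\xi|f}_{2}$ and $s_1(\xi,u_j)+s_2(\xi,u_{j-1})=0$ for $1\leq j\leq k$, each for all $\xi\in W^1_{0,\mathbb{C}}(\Omega)$ and each normalised within the lemma's scheme (the $X_1$-component of $u_j$ being fixed by the next order). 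It then remains to recognise these as the displayed problems. Transporting the $u_{-1}$-identity to $\Omega_2$ through the isometry $\iota$ turns it into the weak form on $W^1_{0,\mathbb{C}}(\Omega_2)$ of $-{\nabla_{0}}^{*}p_{22}{\nabla_{w}}u_{-1\,2}=f_2$ with homogeneous Dirichlet data; since $\Omega_2$ is bounded with $C^2$ boundary and $p_2|_{\Omega_2}\in C^1(\bar{\Omega}_2,\mathbb{R})$, Theorem~\ref{regularcases} upgrades $u_{-1\,2}$ to a solution in $W^1_{0,\mathbb{C}}(\Omega_2)\cap W^2_{\mathbb{C}}(\Omega_2)$, so its normal derivative has a trace on $\partial\Omega_2$. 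Testing the higher-order identities against functions supported inside $\Omega_1$, respectively inside $\Omega_2$, gives the displayed interior equations for $u_{j\,1}$ on $\Omega_1$ and for $u_{j\,2}$ on $\Omega_2$; testing against functions that do not vanish near $\partial\Omega_2$ and integrating by parts on $\Omega_1$ and on $\Omega_2$ separately---justified by the $W^2$-regularity obtained at each step from Theorem~\ref{regularcases}---leaves, as the interface terms that must cancel, the displayed matching of the normal derivatives across $\partial\Omega_2$, while consistency of the extension forces the Dirichlet matching of $u_{j\,2}$ with $u_{(j-1)\,1}$ on $\partial\Omega_2$ (which is $0$ at the first step, because $u_{-1}\in X_1$ vanishes on $\Omega_1$). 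A concluding induction, again via Theorem~\ref{regularcases} on $\Omega_1$ and on $\Omega_2$, shows that these mixed Dirichlet/transmission problems are uniquely solvable, matching the abstract uniqueness from Lemma~\ref{lemma:LionsAbstract}.

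The main obstacle is exactly this translation: converting the single global variational identity on $W^1_{0,\mathbb{C}}(\Omega)$ at each order of $\varepsilon$ into a problem on $\Omega_1$ coupled to one on $\Omega_2$ through Dirichlet and normal-derivative traces on $\partial\Omega_2$. It needs (a) enough regularity to make normal traces meaningful and to integrate by parts on each subdomain---supplied by the $C^2$ boundaries of $\Omega_1$ and $\Omega_2$, by $p_j|_{\Omega_j}\in C^1$, and by the hypothesis $f\in W^1_{0,\mathbb{C}}(\Omega)$, through Theorem~\ref{regularcases}---and (b) careful bookkeeping of which subdomain carries the datum at each order, the roles alternating: at order $\varepsilon^{-1}$ the source sits on $\Omega_2$; at order $\varepsilon^0$ it is passed to $\Omega_1$ through the normal trace of $u_{-1\,2}$, while $u_{0\,2}$ is forced to vanish; and thereafter each step transfers information from $\Omega_2$ to $\Omega_1$ through a normal-derivative trace and from $\Omega_1$ to $\Omega_2$ through a Dirichlet trace. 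Everything else is a routine specialisation of the appendix lemmas, made possible by Theorem~\ref{thm:generalizationLions}.
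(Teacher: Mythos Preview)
Your approach is exactly what the paper intends: the corollary is stated without proof, introduced only by the sentence ``We give a concrete example of the application of Lions' Lemma~\ref{basiclemmalions} to the diffusion equation~\eqref{diffusionEquation2},'' so it is meant to follow directly by combining Theorem~\ref{thm:generalizationLions} (which verifies Assumption~\ref{lionsass}) with Lemma~\ref{lemma:LionsAbstract}, and then reading off the concrete transmission problems. Your write-up carries this out correctly and in fact supplies the translation step---from the abstract recursion in $s_1,s_2$ to the interior equations plus Dirichlet/Neumann matching on $\partial\Omega_2$ via Theorem~\ref{regularcases}---that the paper leaves entirely to the reader.

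Two small points of bookkeeping. First, in part~(i) the reference should be to Lemma~\ref{lemma:LionsAbstract}\,(i), not to Lemma~\ref{basiclemmalions}: the latter concerns the representation $\omega=s_1(\xi,\cdot)$ for functionals annihilating $X_1$, not the solvability of $s_1+\varepsilon s_2=\braket{\cdot|\eta}$. Second, your parenthetical ``the $X_1$-component of $u_j$ being fixed by the next order'' is not quite how Lemma~\ref{lemma:LionsAbstract} normalises things: the side condition $s_2(\xi,u_j)=0$ for all $\xi\in X_1$ is imposed at the \emph{same} order $j$ (this is precisely what Lemma~\ref{basiclemmalions} is used for inside the proof of Lemma~\ref{lemma:LionsAbstract}), not inherited from order $j+1$. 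Neither point affects the substance of your argument.
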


\section{Concluding remarks}

Based on the foundation provided by our previous
paper~\cite{AkBe2008}, in this paper, we generalize Lions' results in
various ways.  Our results provide the existence of strong solutions
of the operator equation instead of Lions' weak solutions. In
particular, we generalize Lions' results to include diffusivities that
are piecewise $C^1$ up to the boundary of $\Omega_1$ and $\Omega_2$,
where $\Omega_2 := \Omega \, \setminus \, \overline{\Omega}_1$.  Note
that the geometric configuration is restricted to the case that the
boundaries of $\Omega_1$ and $\Omega$ have a non-empty intersection.
In the one dimensional case, a full characterization of the limiting
inverse operator is given in our preceding paper~\cite{AkBe2008},
independent of the configuration.  The other case corresponding to
the right of Figure~\ref{fig:domain1}, i.e., when the boundary of
$\Omega_1$ has an empty intersection with that of $\Omega$ with
$n \geq 2$, is still largely open.  On the other hand, for that
configuration, a characterization of the limit of the discretized
inverse operators with piecewise constant coefficients is given by the
first author in~\cite{AkYe2009} using linear finite element and finite
volume methods.

\section{Appendix}

%%%
%\subsection{Lions--Appendix}

The following are the assumptions for Lions' abstract Lemma.

\begin{ass} \label{lionsass}
Let $X$ be a non-trivial complex Hilbert space, $X_1$ a closed 
subspace 
of $X$ and 
$s_1 : X^2 \rightarrow 
{\mathbb{C}}$, $s_2 : X^2 \rightarrow {\mathbb{C}}$ be bounded 
sesquilinear forms on $X$, i.e., 
sesquilinear forms for which there are 
$C_1,C_2 \geq 0$ such that 
\begin{equation*}
|s_i(\xi,\eta)| \leq C_i \, \|\xi\| \, \|\eta\|
\end{equation*}  
for all $\xi,\eta \in X$ and $i \in \{1,2\}$. In addition, 
let $s_1, s_2$ be Hermitean, positive and satisfy 
the following conditions.
\begin{itemize}
\item[(i)] There is $\alpha > 0$ such that 
\begin{equation*}
s_1(\xi,\xi) + s_2(\xi,\xi) \geq \alpha \|\xi\|^2
\end{equation*}
for all $\xi \in X$,
\item[(ii)] 
\begin{itemize}
\item[1)] $s_1(\xi,\xi^{\prime}) = 0$ for all $\xi \in X_1$ and 
$\xi^{\prime} \in X$,
\item[2)]  for every 
$\omega \in L(X,{\mathbb{C}})$ such that $\ker \omega \supset X_1$, 
there is $\xi \in X$ such that $\omega = s_1(\xi,\cdot)$. In addition,
if $\xi^{\prime} \in X$ is such that $\omega = s_1({\xi}^{\prime},
\cdot)$, then ${\xi}^{\prime} - \xi \in X_1$. 
\end{itemize}
\item[(iii)] There is $\alpha_2 > 0$ such that 
\begin{equation*}
s_2(\xi,\xi) \geq \alpha_2 \|\xi\|^2
\end{equation*}
for all $\xi \in X_1$.
\end{itemize}
\end{ass}

\begin{lemma} \label{basiclemmalions}
Assume~\ref{lionsass}.  Then, for every $\omega \in L(X,{\mathbb{C}})$
such that $\ker \omega \supset X_1$, there is a unique $\xi \in X$
such that $\omega = s_1(\xi,\cdot)$ and $s_2(\xi,\xi^{\prime}) = 0$
for all $\xi^{\prime} \in X_1$.
\end{lemma}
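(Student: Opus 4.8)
The plan is to reduce the statement to the Riesz representation theorem applied to the restriction of $s_2$ to the closed subspace $X_1$. Given $\omega \in L(X,\mathbb{C})$ with $\ker \omega \supset X_1$, Assumption~\ref{lionsass}(ii)(2) provides at least one representative $\xi_0 \in X$ with $\omega = s_1(\xi_0,\cdot)$, and its second clause shows that the set of all such representatives is exactly the affine subspace $\xi_0 + X_1$. Thus the assertion is equivalent to the existence of a unique $\zeta \in X_1$ with $s_2(\xi_0 + \zeta,\xi^{\prime}) = 0$ for all $\xi^{\prime} \in X_1$, i.e., with $s_2(\zeta,\xi^{\prime}) = - s_2(\xi_0,\xi^{\prime})$ for every $\xi^{\prime} \in X_1$.

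Next I would note that, since $s_2$ is bounded and Hermitean on $X$ and satisfies $s_2(\xi,\xi) \geq \alpha_2 \|\xi\|^2$ on $X_1$ by Assumption~\ref{lionsass}(iii), its restriction to $X_1 \times X_1$ is a bounded, Hermitean, coercive sesquilinear form; hence it is an inner product on $X_1$ whose induced norm is equivalent to $\|\,\|$, and $X_1$ equipped with it is a complex Hilbert space (using that $X_1$ is closed in $X$). The map $\xi^{\prime} \mapsto - s_2(\xi_0,\xi^{\prime})$ is a bounded linear functional on $X_1$, so the Riesz representation theorem in this Hilbert space yields a unique $\zeta \in X_1$ with $s_2(\zeta,\xi^{\prime}) = - s_2(\xi_0,\xi^{\prime})$ for all $\xi^{\prime} \in X_1$. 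Putting $\xi := \xi_0 + \zeta$, one gets $\omega = s_1(\xi,\cdot)$ because $s_1(\zeta,\cdot) = 0$ by Assumption~\ref{lionsass}(ii)(1), while $s_2(\xi,\xi^{\prime}) = 0$ on $X_1$ holds by the choice of $\zeta$; this settles existence.

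For uniqueness, let $\xi,\tilde{\xi} \in X$ both satisfy the two conditions. Then $s_1(\xi - \tilde{\xi},\cdot) = \omega - \omega = 0$, so $\xi - \tilde{\xi} \in X_1$ by the uniqueness-modulo-$X_1$ clause of Assumption~\ref{lionsass}(ii)(2). Moreover $s_2(\xi - \tilde{\xi},\xi^{\prime}) = 0$ for all $\xi^{\prime} \in X_1$; taking $\xi^{\prime} = \xi - \tilde{\xi}$ and using Assumption~\ref{lionsass}(iii) gives $0 = s_2(\xi - \tilde{\xi},\xi - \tilde{\xi}) \geq \alpha_2 \|\xi - \tilde{\xi}\|^2$, whence $\xi = \tilde{\xi}$.

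The main thing to watch is the bookkeeping of the sesquilinear convention --- which slot is linear and which is conjugate-linear --- so that the functional handed to the Riesz theorem is genuinely linear and the representative $\zeta$ lands in the correct argument of $s_2$; with the convention fixed consistently this is routine. Note that Assumption~\ref{lionsass}(i) is not needed here: the coercivity of $s_1 + s_2$ on all of $X$ is used only later, in Lemma~\ref{lemma:LionsAbstract}.
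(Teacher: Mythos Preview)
Your proof is correct and follows essentially the same route as the paper: pick a representative $\xi_0$ for $\omega$ via Assumption~\ref{lionsass}(ii)(2), observe that the remaining freedom is exactly $X_1$, and then use the coercivity of $s_2$ on $X_1$ from~(iii) to select the unique shift $\zeta\in X_1$ killing $s_2(\cdot,\xi')$ on $X_1$, with the uniqueness argument identical to the paper's. The only cosmetic difference is that you invoke the Riesz representation theorem on the Hilbert space $(X_1, s_2|_{X_1\times X_1})$ directly, whereas the paper phrases the same step via the bounded self-adjoint operator $T_2$ representing $s_2$ and the bijectivity of its compression $T_{21} := P_1 T_2 P_1|_{X_1}$.
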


\begin{proof}
We note that, since $s_2$ is sesquilinear, Hermitean and 
positive, there is a uniquely determined positive 
self-adjoint $T_2 \in L(X,X)$ such that 
\begin{equation*}
s_2(\xi,{\xi}^{\prime}) = \braket{\xi|T_2 {\xi}^{\prime}}
\end{equation*}
for all $\xi, {\xi}^{\prime} \in X$. 
In the following, we denote by $P_1$ the projection onto $X_1$. Then 
the restriction  
$T_{21}$ of $P_1 T_2 P_1$ in domain and in image to $X_1$ is a positive 
self-adjoint element of $L(X_1,X_1)$. Further, since there is 
$\alpha_2 > 0 $ such that for every 
$\xi \in X_1$ 
\begin{equation*}
s_2(\xi,\xi) = \braket{P_1 \xi|T_2 P_1 \xi} = 
\braket{\xi| T_{21} \xi} \geq \alpha_2 \|\xi\|^2 \, \, ,   
\end{equation*} 
$T_{21}$ is strictly positive and hence bijective. If $\omega$ is an 
element of 
$L(X,{\mathbb{C}})$  such that $\ker \omega \supset X_1$, 
then there is $\xi \in X$ such that 
$\omega = s_1(\xi,\cdot)$. Also for $\xi^{\prime \prime} \in X_1$,
$\omega = s_1(\xi + \xi^{\prime \prime},\cdot)$.
Then 
\begin{equation} \label{addeq}
s_2(\xi + \xi^{\prime \prime},\xi^{\prime }) = 0 
\end{equation}
for all $\xi^{\prime} \in X_1$ if and only if 
\begin{equation*}
\braket{T_{21} \xi^{\prime \prime}| \xi^{\prime}} = 
s_2(\xi^{\prime \prime},\xi^{\prime}) = - s_2(\xi,\xi^{\prime})
= - \braket{\xi|T_2 \xi^{\prime}} = 
- \braket{P_1 T_2 \xi|\xi^{\prime}} 
\end{equation*}
for all $\xi^{\prime} \in X_1$. Hence,
(\ref{addeq}) is satisfied 
for all $\xi^{\prime} \in X_1$ if 
\begin{equation*}
 \xi^{\prime \prime} = - T_{21}^{-1} P_1 T_2 \xi \, \, .
\end{equation*}
Further, if $\xi_1,\xi_2 \in X$ are such that 
$\omega = s_1(\xi_i,\cdot)$ and 
$s_2(\xi_i,\xi^{\prime}) = 0$ for all $\xi^{\prime} \in X_1$
and $i \in \{1,2\}$, then $\xi_{1} - \xi_2 \in X_1$ and 
\begin{equation*}
0 = s_2(\xi_{1} - \xi_2,\xi_{1} - \xi_2) \geq \alpha_2 
\|\xi_{1} - \xi_2\|^2  \, \, . 
\end{equation*}
Hence $\xi_1 = \xi_2$.
\end{proof}

\begin{lemma} \label{lemma:LionsAbstract}
Assume~\ref{lionsass}. For $\eta \in X$, $\varepsilon > 0$
and $k \in {\mathbb{N}} \cup \{-1\}$, it follows that
\begin{itemize}
\item[(i)] There is a unique ${\xi}_{\varepsilon} \in X$ such that 
\begin{equation*}
s_1({\xi},\xi_{\varepsilon}) 
+ \varepsilon \, s_2({\xi}, 
\xi_{\varepsilon}) 
= \braket{{\xi}|\eta}
\end{equation*} 
for all $\xi \in X$.
\item[(ii)] There is $C > 0$ such that 
\begin{equation*}
\left\|\, \sum_{j=-1}^{k} \varepsilon^{j} {\xi}_{j} - 
\xi_{\varepsilon}
\, \right\|
\leq C \varepsilon^{k+1}  \, \, , 
\end{equation*} 
where $\xi_{-1} \in X_{1}$ and $\xi_{0},\dots,\xi_{k} \in X$ are 
uniquely determined by 
\begin{align*}
& s_2(\xi,\xi_{-1}) = \braket{\xi|\eta} 
\, \, \textrm{for all} \, \, \xi \in X_1\, \,  , \\
& s_1(\xi,\xi_{0}) = \braket{\xi|\eta} - s_2(\xi,\xi_{-1})
\, \, \textrm{for all} \, \, \xi \in X 
\, \, , \, \, 
s_2(\xi,\xi_{0}) = 0 \, \, \textrm{for all} \, \, \xi \in X_1\, \, 
, \\
& s_1(\xi,\xi_{j}) = - s_2(\xi,\xi_{j-1})
\, \, \textrm{for all} \, \, \xi \in X 
\, \, , \, \, 
s_2(\xi,\xi_{j}) = 0 \, \, \textrm{for all} \, \, \xi \in X_1\, \, , 
\end{align*}
where $j \in \{1,\dots,k\}$.
\end{itemize}
\end{lemma}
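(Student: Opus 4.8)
The plan is to handle (i) by Riesz representation in a suitable inner product and (ii) by a ``Laurent polynomial plus residual'' argument resting on Lemma~\ref{basiclemmalions}. For (i), put $s_{\varepsilon}:=s_{1}+\varepsilon s_{2}$; it is bounded, Hermitean and positive, and for $0<\varepsilon\le1$ the identity
\begin{equation*}
s_{\varepsilon}(\xi,\xi)=(1-\varepsilon)\,s_{1}(\xi,\xi)+\varepsilon\bigl(s_{1}(\xi,\xi)+s_{2}(\xi,\xi)\bigr)\ge\varepsilon\,\alpha\,\|\xi\|^{2},
\end{equation*}
using positivity of $s_{1}$ and condition (i) of Assumption~\ref{lionsass}, shows it is coercive (for $\varepsilon>1$ one has $s_{\varepsilon}(\xi,\xi)\ge s_{1}(\xi,\xi)+s_{2}(\xi,\xi)\ge\alpha\|\xi\|^{2}$ directly). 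Hence $s_{\varepsilon}$ is an inner product on $X$ whose norm is equivalent to $\|\cdot\|$, so $(X,s_{\varepsilon})$ is a Hilbert space and $\xi\mapsto\braket{\xi|\eta}$ is a bounded conjugate-linear functional on it; the Riesz representation theorem then furnishes a unique $\xi_{\varepsilon}\in X$ with $s_{\varepsilon}(\xi,\xi_{\varepsilon})=\braket{\xi|\eta}$ for every $\xi\in X$, which is (i).

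For (ii) I would first construct $\xi_{-1},\dots,\xi_{k+1}$ — note the extra element $\xi_{k+1}$, which will be needed for the sharp exponent. Condition (iii) makes $s_{2}$ coercive on the closed subspace $X_{1}$, so Riesz representation inside $X_{1}$ gives a unique $\xi_{-1}\in X_{1}$ with $s_{2}(\xi,\xi_{-1})=\braket{\xi|\eta}$ for all $\xi\in X_{1}$. Then, for $j=0,\dots,k+1$ in turn, the prescribed right-hand side functional $\omega_{j}$ — namely $\xi\mapsto\braket{\xi|\eta}-s_{2}(\xi,\xi_{-1})$ for $j=0$ and $\xi\mapsto-s_{2}(\xi,\xi_{j-1})$ for $j\ge1$ — annihilates $X_{1}$: for $j=0$ by the defining identity of $\xi_{-1}$, and for $j\ge1$ because $\xi_{j-1}$ was built so that $s_{2}(\cdot,\xi_{j-1})$ vanishes on $X_{1}$. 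After passing to complex conjugates to move the unknown into the first argument (so as to match the form $s_{1}(\xi,\cdot)$ appearing in Lemma~\ref{basiclemmalions}) and invoking the Hermitean symmetry of $s_{1}$ and $s_{2}$, Lemma~\ref{basiclemmalions} delivers a unique $\xi_{j}\in X$ with $s_{1}(\xi,\xi_{j})=\omega_{j}(\xi)$ for all $\xi\in X$ and $s_{2}(\xi,\xi_{j})=0$ for all $\xi\in X_{1}$ — precisely the equations in the statement. The recursion runs through $j=k+1$.

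The core step is the residual computation. Writing $S:=\sum_{j=-1}^{k+1}\varepsilon^{j}\xi_{j}$ and expanding $s_{\varepsilon}(\xi,S)=s_{1}(\xi,S)+\varepsilon\,s_{2}(\xi,S)$ with the defining relations inserted: in $s_{1}(\xi,S)$ the $j=-1$ term vanishes, since $s_{1}$ is zero whenever one of its arguments lies in $X_{1}$ (part 1) of condition (ii) together with Hermitean symmetry); the $j=0$ term equals $\braket{\xi|\eta}-s_{2}(\xi,\xi_{-1})$, and the $j=1,\dots,k+1$ terms equal $-\varepsilon^{j}s_{2}(\xi,\xi_{j-1})$. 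The factor $\varepsilon$ in $\varepsilon\,s_{2}(\xi,S)$ shifts the index by one, so all the $s_{2}$–contributions telescope and only the top one survives:
\begin{equation*}
s_{\varepsilon}(\xi,S)=\braket{\xi|\eta}+\varepsilon^{k+2}\,s_{2}(\xi,\xi_{k+1})\qquad\text{for all }\xi\in X.
\end{equation*}
Subtracting $s_{\varepsilon}(\xi,\xi_{\varepsilon})=\braket{\xi|\eta}$ gives $s_{\varepsilon}(\xi,S-\xi_{\varepsilon})=\varepsilon^{k+2}s_{2}(\xi,\xi_{k+1})$ for all $\xi$; taking $\xi:=S-\xi_{\varepsilon}$, bounding the left side below by $\varepsilon\alpha\|S-\xi_{\varepsilon}\|^{2}$ (for $0<\varepsilon\le1$) and the right side above by $\varepsilon^{k+2}C_{2}\|S-\xi_{\varepsilon}\|\,\|\xi_{k+1}\|$, and cancelling one factor of $\|S-\xi_{\varepsilon}\|$ yields $\|S-\xi_{\varepsilon}\|\le(C_{2}/\alpha)\,\|\xi_{k+1}\|\,\varepsilon^{k+1}$. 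Finally $\sum_{j=-1}^{k}\varepsilon^{j}\xi_{j}=S-\varepsilon^{k+1}\xi_{k+1}$, so the triangle inequality gives
\begin{equation*}
\Bigl\|\,\sum_{j=-1}^{k}\varepsilon^{j}\xi_{j}-\xi_{\varepsilon}\,\Bigr\|\le\|S-\xi_{\varepsilon}\|+\varepsilon^{k+1}\|\xi_{k+1}\|\le\Bigl(\tfrac{C_{2}}{\alpha}+1\Bigr)\|\xi_{k+1}\|\,\varepsilon^{k+1},
\end{equation*}
which proves (ii) with $C:=(C_{2}/\alpha+1)\|\xi_{k+1}\|$ for $\varepsilon\in(0,1]$ (the case $\varepsilon>1$ being immediate after enlarging $C$).

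I expect the main obstacle to be two-fold: keeping the recursion consistent — at every step one must check that $\omega_{j}$ kills $X_{1}$, and the conjugation/Hermitean bookkeeping needed to pass between $s_{i}(\xi,\cdot)$ and $s_{i}(\cdot,\xi)$ when applying Lemma~\ref{basiclemmalions} is easy to get wrong — and, more conceptually, recognizing that substituting only $\sum_{j=-1}^{k}\varepsilon^{j}\xi_{j}$ leaves a residual of size $O(\varepsilon^{k})$, so that one must carry the auxiliary coefficient $\xi_{k+1}$ and absorb its contribution through the triangle inequality to reach the asserted order $\varepsilon^{k+1}$. Uniqueness of $\xi_{\varepsilon}$ and of the $\xi_{j}$ is already built into the Riesz theorem and into Lemma~\ref{basiclemmalions}, so it requires no separate argument.
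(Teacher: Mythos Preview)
Your argument follows the same skeleton as the paper's --- Riesz/Lax--Milgram for (i), then a telescoping residual computation for (ii), with the coefficients $\xi_j$ produced recursively via Lemma~\ref{basiclemmalions}. The one substantive difference is that you carry the expansion out to order $k+1$ and then peel off the extra term $\varepsilon^{k+1}\xi_{k+1}$ by the triangle inequality, whereas the paper stops at order $k$ and uses directly that the residual of $\sum_{-1}^{k}\varepsilon^{j}\xi_j$ in the form $s_\varepsilon$ is $\varepsilon^{k+1}s_2(\xi,\xi_k)$. Your extra step actually buys something: the only coercivity constant for $s_1+\varepsilon s_2$ derivable from Assumption~\ref{lionsass}(i) is $\min(\varepsilon,1)\,\alpha$, so dividing by it costs a factor $\varepsilon^{-1}$ for small $\varepsilon$. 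The paper writes the final estimate with an $\varepsilon$-independent $\alpha$, which is not justified as stated; read with the correct constant, that argument yields only $\bigl\|\sum_{-1}^{k}\varepsilon^{j}\xi_j-\xi_\varepsilon\bigr\|\le (C_2/\alpha)\,\|\xi_k\|\,\varepsilon^{k}$. Your device of going one step further and absorbing $\xi_{k+1}$ afterwards recovers the full order $\varepsilon^{k+1}$ and is the cleaner version of the argument.
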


\begin{proof}
`(i)': Since $s_1,s_2$ are sesquilinear, Hermitean and 
positive, there are uniquely determined positive 
self-adjoint $T_1,T_2 \in L(X,X)$ such that 
\begin{equation*}
s_i(\xi,{\xi}^{\prime}) = \braket{\xi|T_i {\xi}^{\prime}}
\end{equation*}
for all $\xi, {\xi}^{\prime} \in X$ and $i \in \{1,2\}$. Hence 
\begin{equation*}
s_1(\xi,{\xi}^{\prime}) 
+ \varepsilon \, s_2(\xi,{\xi}^{\prime}) =
\braket{\xi|(T_1 + \varepsilon \, T_2) {\xi}^{\prime}}
\end{equation*}
for all $\xi, {\xi}^{\prime} \in X$. In addition,  
there is $\alpha > 0$ such that 
\begin{equation*}
s_1(\xi,{\xi}) 
+ \varepsilon \, s_2(\xi,{\xi}) = 
\braket{\xi|(T_1 + \varepsilon \, T_2) {\xi}} \geq 
\alpha \, \|\xi\|^2
\end{equation*}
for all $\xi \in X$. As a consequence, 
$T_1 + \varepsilon \, T_2$ is strictly 
positive and hence bijective. Therefore 
\begin{equation*}
{\xi}_{\varepsilon} := (T_1 + \varepsilon \, T_2)^{-1} \eta 
\end{equation*}
satisfies 
\begin{equation*}
s_1({\xi},\xi_{\varepsilon}) 
+ \varepsilon \, s_2({\xi}, 
\xi_{\varepsilon}) 
= \braket{{\xi}|\eta}
\end{equation*} 
for all $\xi \in X$. Further, if ${\xi}^{\prime} \in X$ is such 
that 
\begin{equation*}
s_1({\xi},\xi^{\prime}) 
+ \varepsilon \, s_2({\xi}, 
\xi^{\prime}) 
= \braket{{\xi}|\eta}
\end{equation*} 
for all $\xi \in X$, then 
\begin{equation*}
0 = s_1(\xi^{\prime} - \xi_{\varepsilon},\xi^{\prime} - \xi_{\varepsilon}) 
+ \varepsilon \, s_2(\xi^{\prime} - \xi_{\varepsilon}, 
\xi^{\prime} - \xi_{\varepsilon}) \geq 
\alpha \, \|\xi^{\prime} - \xi_{\varepsilon}\|^2 
\end{equation*} 
and hence $\xi^{\prime} = \xi_{\varepsilon}$.
\newline
`(ii)':
For this, let $\xi_{-1},\dots,\xi_{k} \in X$ and 
\begin{equation*}
\xi_{\varepsilon k} := \sum_{j=-1}^{k} \varepsilon^{j} 
\xi_{j} \, \, . 
\end{equation*}
Then 
\begin{align*}
& s_1(\xi,\xi_{\varepsilon k}) 
+ \varepsilon \, s_2(\xi,\xi_{\varepsilon k}) = 
\sum_{j=-1}^{k} \varepsilon^{j} s_1(\xi,\xi_{j}) + 
\sum_{j=-1}^{k} \varepsilon^{j+1} s_2(\xi,\xi_{j}) \\
& = \varepsilon^{-1} s_1(\xi,\xi_{-1}) +  
\varepsilon^{k+1} s_2(\xi,\xi_{k}) + 
\sum_{j=0}^{k} \varepsilon^{j}  \, [ \, s_1(\xi,\xi_{j}) + 
s_2(\xi,\xi_{j-1}) \, ] 
\end{align*}
for every $\xi \in X$.
Therefore, if 
\begin{equation*}
s_1(\xi,\xi_{-1}) = 0 \, \, , \, \, 
s_1(\xi,\xi_{0}) = \braket{{\xi}|\eta} - s_2(\xi,\xi_{-1}) \, \, , \, \, 
s_1(\xi,\xi_{j}) = - s_2(\xi,\xi_{j-1}) \, \, , 
\end{equation*}
for all $\xi \in X$,
where $j \in \{1,\dots,k\}$, then  
\begin{equation*}
s_1(\xi,\xi_{\varepsilon k}) 
+ \varepsilon \, s_2(\xi,\xi_{\varepsilon k}) = 
\braket{{\xi}|\eta} + \varepsilon^{k+1} s_2(\xi,\xi_{k})
\end{equation*}
for all $\xi \in X$
and hence 
\begin{equation*}
s_1(\xi,\xi_{\varepsilon k} - \xi_{\varepsilon}) 
+ \varepsilon \, s_2(\xi,\xi_{\varepsilon k}-\xi_{\varepsilon}) = 
\varepsilon^{k+1} s_2(\xi,\xi_{k})
\end{equation*}
for all $\xi \in X$. In particular, this implies that 
\begin{align*}
& \alpha \, \|\xi_{\varepsilon k} - \xi_{\varepsilon}\|^2 
\leq s_1(\xi_{\varepsilon k} - \xi_{\varepsilon},
\xi_{\varepsilon k} - \xi_{\varepsilon}) 
+ \varepsilon \, 
s_2(\xi_{\varepsilon k} - \xi_{\varepsilon},\xi_{\varepsilon k}-\xi_{\varepsilon}) \\
& =
\varepsilon^{k+1} s_2(\xi_{\varepsilon k} - \xi_{\varepsilon},\xi_{k}) \leq C_2 \, \varepsilon^{k+1} \, 
\|\xi_{k}\| \, \|\xi_{\varepsilon k} - \xi_{\varepsilon}\| \, \, , 
\end{align*}
where $C_2 \geq 0$ is such that 
\begin{equation*}
|s_2(\xi,\eta)| \leq C_2 \, \|\xi\| \, \|\eta\|
\end{equation*}  
for all $\xi,\eta \in X$. Hence it follows that 
\begin{equation*}
\|\xi_{\varepsilon k} - \xi_{\varepsilon}\| \leq \frac{C_2}{\alpha}
\, \|\xi_{k}\| \, \varepsilon^{k+1} \, \, .
\end{equation*}
In the following, we denote by $P_1$ the projection onto $X_1$. Then 
the restriction  
$T_{21}$ of $P_1 T_2 P_1$ in domain and in image to $X_1$ is a positive 
self-adjoint element of $L(X_1,X_1)$. Further, since there is 
$\alpha_2 > 0 $ such that for every 
$\xi \in X_1$ 
\begin{equation*}
s_2(\xi,\xi) = \braket{P_1 \xi|T_2 P_1 \xi} = 
\braket{\xi| T_{21} \xi} \geq \alpha_2 \|\xi\|^2 \, \, ,   
\end{equation*} 
$T_{21}$ is strictly positive and hence bijective. Hence 
it follows for $\xi \in X_1$ and 
\begin{equation*}
\xi_{-1} := T_{21}^{-1}P_1 \eta
\end{equation*}
that 
\begin{align*}
& s_2(\xi,\xi_{-1}) =  
\braket{\xi|T_2 T_{21}^{-1}P_1 \eta} = 
\braket{P_1 \xi|T_2 T_{21}^{-1}P_1 \eta} =
\braket{\xi|P_1 T_2 P_1 T_{21}^{-1}P_1 \eta} \\
& = 
\braket{\xi|P_1 \eta} = \braket{P_1 \xi|\eta} = \braket{\xi|\eta}
\, \, . 
\end{align*}
Further, if $\xi_{-1}^{\prime} \in X_1$ is such that 
\begin{equation*}
s_2(\xi,\xi_{-1}^{\prime}) = \braket{\xi|\eta}
\end{equation*}
for every $\xi \in X_1$, then 
\begin{equation*}
0 = s_2(\xi_{-1}^{\prime} - \xi_{-1},\xi_{-1}^{\prime} - \xi_{-1})
\geq \alpha_2 \|\xi_{-1}^{\prime} - \xi_{-1}\|^2
\end{equation*}
and hence $\xi_{-1}^{\prime} = \xi_{-1}$. 
Further, since 
\begin{equation*}
\braket{\xi|\eta} - s_{2}(\xi,\xi_{-1}) = 0 
\end{equation*}
for every $\xi \in X_1$, it follows that 
\begin{equation*}
\braket{\eta|\cdot} - s_{2}(\xi_{-1},\cdot) 
\end{equation*}
is an element of $L(X,{\mathbb{C}})$ whose kernel contains $X_1$. 
Hence, by Lemma~\ref{basiclemmalions}, there is 
a unique $\xi_{0} \in X$ such that
\begin{equation*}
s_1(\xi,\xi_{0}) = \braket{\xi|\eta} - s_{2}(\xi,\xi_{-1})
\end{equation*}
for every $\xi \in X$ and $s_2(\xi,\xi_0) = 0$ for every 
$\xi \in X_1$. Finally,  by Lemma~\ref{basiclemmalions}, it follows 
recursively the existence and 
uniqueness of $\xi_1,\dots,\xi_{k} \in X$ such that 
\begin{equation*}
s_1(\xi,\xi_{j}) = - s_2(\xi,\xi_{j-1})
\, \, \textrm{for all} \, \, \xi \in X 
\, \, , \, \, 
s_2(\xi,\xi_{j}) = 0 \, \, \textrm{for all} \, \, \xi \in X_1\, \, , 
\end{equation*}
for $j \in \{1,\dots,k\}$. 
\end{proof}


\begin{thebibliography}{99}
\bibitem{AkBe2008}
Aksoylu B and Beyer H. R. 2008,
{\em Results on the diffusion equation with rough coefficients},
SIAM J. Math. Anal., submitted, also available at 
\href{http://arxiv.org/pdf/0810.3427v1}{arXiv 0810.3427}.

\bibitem{AGKS:2007}
Aksoylu B, Graham I G, Klie H, and Scheichl R 2008, {\em Towards a
rigorously justified algebraic preconditioner for high-contrast
diffusion problems}, Comput. Vis. Sci., {\bf 11}, 319-331,
\newblock{
\href{http://www.springerlink.com/content/e9633h7441117836/fulltext.pdf}
{doi:10.1007/s00791-008-0105-1}}.

\bibitem{AkYe2009}
Aksoylu B and Yeter Z,
\emph{Conservative finite volume discretization of
high-contrast diffusion equation and robust preconditioners}, 
in preparation.

\bibitem{bakhvalov1}Bakhvalov N S, Knyazev A V 1990, 
{\em A new iterative algorithm for solving problems of the 
fictitious flow method for elliptic equations}, Soviet Math. Dokl. 
{\bf 41}, 481-485.

\bibitem{knyazev1} Knyazev A V 1992, 
{\em Iterative solution of PDE with strongly varying 
coefficients: Algebraic version}, in: Beauwens R, 
De Groen P (eds) 1992, {\em Iterative methods in linear algebra}, 
Elsevier: New York, 85-89. 

\bibitem{KnWi:2003}
Knyazev A and Widlund O 2003,
{\em {Lavrentiev regularization + Ritz approximation = uniform finite element 
error estimates for differential equations with rough coefficients}}, 
Math. Comp., {\bf 72}, 17-40.

\bibitem{lions} Lions J L 1973, {\em Perturbations 
Singulieres Dans Les Problemes Aux Limites Et En Controle Optimal}, 
Lecture Notes in Math., {\bf 323}, Springer: Berlin.

\end{thebibliography}
\end{document}